\theoremstyle{plain}
\newtheorem{thm}{Theorem}[section]
\newtheorem{lem}[thm]{Lemma}
\theoremstyle{definition}
\newtheorem{rem}[thm]{Remark}
\newtheorem{defi}[thm]{Definition}
\newtheorem{conv}[thm]{Convention}
\numberwithin{thm}{section}
\numberwithin{equation}{section}
\newcommand\ddfrac[2]{\frac{\displaystyle #1}{\displaystyle #2}}
\newcommand*{\medcap}{\mathbin{\scalebox{1.7}{\ensuremath{\cap}}}}
\def\loc{\operatorname{loc}}
\def\supp{\operatorname{supp}}
\def\esup{\operatornamewithlimits{ess\,sup}}
\def\Id{\operatorname{I}}
\def\ap{\approx}
\def\qq{\qquad}
\def\rw{\rightarrow}
\def\hra{\hookrightarrow}
\def\M{\mathcal M}
\def\o{\omega}
\def\la{\lambda}
\def\vp{\varphi}
\def\i{\infty}
\def\I{(0,\i)}
\def\t{\theta}
\def\R{\mathbb R}
\def\R{\mathbb R}
\def\M{\mathfrak M}
\def\W{{\mathcal W}}
\def\mp{{\mathfrak M}}
\def\rn{\R^n}
\def\o{\omega}
\def\O{\Omega}
\def\la{\lambda}
\def\vp{\varphi}
\def\i{\infty}
\def\I{(0,\i)}
\def\t{\theta}
\def\dual{\,^{^{\bf c}}\!}
\def\Btd {\dual\Bt}
\def\Brd {\dual\Br}
\def\Bt {{B(0,t)}}
\def\Br {{B(0,r)}}
\def\Bxr {{B(x,r)}}
\def\Lploc{L_p^{\rm loc}(\rn)}
\def\LM {LM_{p\t,\o}}
\def\LMd {\dual{\LM}}
\def\Lploc{L_p^{\rm loc}(\rn)}
\def\LM {LM_{p\t,\o}}
\def\LMv {LM_{p\t,\o}(\rn,v)}
\def\LMd {\dual{\LM}}
\def\LMvd {\dual{\LMv}}
\def\Ot {\Omega_{\t}}
\def\Otd {\dual{\Ot}}
\begin{document}

\title[Embeddings between $\LMvd$ and $\LMv$]{Embeddings between weighted complementary local Morrey-type spaces and weighted local Morrey-type spaces}

\author[A. Gogatishvili]{Amiran Gogatishvili}
\address{Institute of Mathematics \\
    Academy of Sciences of the Czech Republic \\
    \v Zitn\'a~25 \\
    115~67 Praha~1, Czech Republic} \email{gogatish@math.cas.cz}

\author[R.Ch.Mustafayev]{Rza Mustafayev}
\address{Department of Mathematics \\ Faculty of Science and Arts \\ Kirikkale
    University \\ 71450 Yahsihan, Kirikkale, Turkey}
\email{rzamustafayev@gmail.com}

\author[T.~{\"U}nver]{Tu\v{g}\c{c}e {\"U}nver}
\address{Department of Mathematics \\ Faculty of Science and Arts \\ Kirikkale
    University \\ 71450 Yahsihan, Kirikkale, Turkey}
\email{tugceunver@gmail.com}

\thanks{}

\subjclass[2010]{Primary 46E30; Secondary 26D10.}

\keywords{local Morrey-type spaces, embeddings, iterated Hardy inequalities}

\begin{abstract}
	In this paper embeddings between  weighted complementary local Morrey-type spaces ${\,^{^{\bf c}}\!}LM_{p\theta,\omega}({\mathbb R}^n,v)$ and  weighted local Morrey-type spaces $LM_{p\theta,\omega}({\mathbb R}^n,v)$ are characterized. In particular, two-sided estimates of the optimal constant $c$ in the inequality
	\begin{equation*}
	\bigg( \int_0^{\infty} \bigg( \int_{B(0,t)} f(x)^{p_2}v_2(x)\,dx \bigg)^{\frac{q_2}{p_2}} u_2(t)\,dt\bigg)^{\frac{1}{q_2}} \le c \bigg( \int_0^{\infty} \bigg( \int_{{\,^{^{\bf c}}\!}B(0,t)} f(x)^{p_1} v_1(x)\,dx\bigg)^{\frac{q_1}{p_1}} u_1(t)\,dt\bigg)^{\frac{1}{q_1}}
	\end{equation*}
	are obtained, where $p_1,\,p_2,\,q_1,\,q_2 \in (0,\infty)$, $p_2 \le q_2$ and $u_1,\,u_2$ and $v_1,\,v_2$ are weights on $(0,\infty)$ and ${\mathbb R}^n$, respectively. The proof is based on the combination of duality techniques with estimates of optimal constants of the embeddings between	weighted local Morrey-type and complementary local Morrey-type spaces and weighted Lebesgue spaces, which reduce the problem to the solutions of the iterated Hardy-type inequalities.
\end{abstract}

\maketitle

%%%%%%%%%%%%%%%%%%%%%%%%%%%%%%%%%%%%%%%%%%%%%%%%%%%%%%%%%%%%%%%%%%%%%%%%%%%%%%%%%%%%%%%%%%%%%%%%%%%%%%%%%%%%%%%%%%%%%%%%%%%%%%%%%%%%%%%%%%%%%%%%%%%%%%%%%%%%%%%%%%%%%%%%%%%%%%%%%%%%%%%%%%%%%%%%%%%%%%%%%%%%%%%%%%%%%%%%%%%%%%%%%%%%%%%%%%%%%%%%%%%%%%%%%%%%%%%%%%%%%%%%%%%%%%%%%%%%%%%%%%%%%%%%%%%%%%%%%%%%%%%%%%%%%%%%%%%%%%%%%%%%%%%%%%%%%%%%%%%%%%%%%%%%%%%%%%%%%%%%%%%%%%%%%%%%%%%%%%%%%%%%%%%%%%%%%%%%%%%%%%%%%%%%%%%%%%%%%%%%%%%%%%%%%%%%%%%%%%%%%%%%%%%%%%%%%%%%%%%%%%%%%%%%%%%%%%%%%%%%%%%%%%%%%%%%%%%%%%%%%%%%%%%%%%%%%%%%%%%%%%%%%%%%%%%%%%%%%%%%%%%%%%%%%%%%%%%%%%%%%%%%%%%%%%%%%%%%%%%%%%%%%%%%%%%%%%%%%%%%%%%%%%%%%%%%%%%%%%%%%%%%%%%%%%%%%%%%%%%%%%%%%%%%%%%%%%%%%%%%%%%%%%%%%%%%%%%%%%%%%%%%%%%%%%%%%%%%%%%%%%%%%%%%%%%%%%%%%%%%%%%%%%%%%%%%%%%%%%%%%%%%%%%%%%%%%%%%%%%%%%%%%%%%%%%%%%%%%%%%%%%%%%%%%%%%%%%%%%%%

\section{Introduction}\label{introduction}

The classical Morrey spaces $\mathcal{M}_{p, \lambda} \equiv \mathcal{M}_{p, \lambda} (\rn)$,
were  introduced by C.~Morrey in \cite{M1938} in order to study regularity questions which appear in the Calculus of Variations,
and defined as follows:  for $0 \le \lambda \le n$ and $1\le p \le \infty$,
$$
\mathcal{M}_{p,\lambda} : = \left\{ f \in   \Lploc:\,\left\| f\right\|_{\mathcal{M}_{p,\lambda }} : =
\sup_{x\in \rn, \; r>0 }
r^{\frac{\lambda-n}{p}} \|f\|_{L_{p}(B(x,r))} <\infty\right\},
$$
where $\Bxr$ is the open ball centered at $x$ of radius $r$.

Note that $\mathcal{M}_{p,0}(\rn) = L_{\infty}(\rn)$ and ${\mathcal M}_{p,n}(\rn) = L_{p}(\rn)$.

These spaces describe local regularity more precisely than Lebesgue spaces and appeared to be quite useful in the study of the local
behavior of solutions to partial differential equations, a priori estimates and other topics in PDE (cf. \cite{giltrud}).    

The classical Morrey spaces were widely investigated during the last decades, including the study of classical operators of Harmonic and Real Analysis - maximal, singular and potential operators - in generalizations of these spaces (so-called Morrey-type spaces). The local Morrey-type spaces and the complementary local Morrey-type spaces introduced by Guliyev in his doctoral thesis \cite{GulDoc}. 

The local Morrey-type spaces $\LM$ and the complementary local Morrey-type spaces $\dual \LM$ were intensively studied during the last decades. The research mainly includes the study of the boundedness of classical operators in these spaces (see, for instance, \cite{Bur1,Bur2,BurGol,BurHus1,BurGulHus2,BurGulHus1,BurGulSerTar,BGGM,BGGM1}), and the
investigation of the functional-analytic properties of them and relation of these spaces with other known
function spaces (see, for instance, \cites{BurNur,batsaw,mu_emb}). We refer the reader to the surveys \cite{Bur1}
and \cite{Bur2} for a comprehensive discussion of the history of $\LM$ and $\dual \LM$. 

Let $A$ be any measurable subset of $\rn$, $n \ge 1$. By $\mp (A)$
we denote the set of all measurable functions on $A$. The symbol
$\mp^+ (A)$ stands for the collection of all $f\in\mp (A)$ which
are non-negative on $A$. The family of all weight functions (also
called just weights) on $A$, that is, measurable, positive and
finite a.e. on $A$, is given by $\W (A)$.

For $p\in (0,\i]$, we define the functional
$\|\cdot\|_{p,A}$ on $\mp (A)$ by
\begin{equation*}
\|f\|_{p,A} : = \bigg\{\begin{array}{cl}
\left(\int_A |f(x)|^p \,dx \right)^{1/p} & \qq\mbox{if}\qq p<\i \\
\esup_{A} |f(x)| & \qq\mbox{if}\qq p=\i
\end{array}
\bigg..
\end{equation*}

If $w\in \W(A)$, then the weighted Lebesgue space
$L_p(w,A)$ is given by
\begin{equation*}
L_{p}(w,A) \equiv L_{p,w}(A) : = \{f\in \mp (A):\,\, \|f\|_{p,w,A} := \|fw\|_{p,A} < \infty \}.
\end{equation*}
When $A=\rn$, we often write simply $L_{p,w}$ and $L_p(w)$ instead of $L_{p,w}(A)$ and $L_p(w,A)$, respectively. 

Throughout the paper, we always denote by $c$ and $C$ a positive
constant, which is independent of main parameters but it may vary
from line to line. However a constant with subscript
such as $c_1$ does not change in different occurrences. By
$a\lesssim b$, ($b\gtrsim a$) we mean that $a\leq \la b$, where
$\la>0$ depends on inessential parameters. If $a\lesssim b$ and
$b\lesssim a$, we write $a\approx b$ and say that $a$ and $b$ are
equivalent.  We will denote by $\bf 1$ the function ${\bf 1}(x) =
1$, $x \in \R$.

Given two quasi-normed vector spaces $X$ and $Y$, we write $X=Y$ if
$X$ and $Y$ are equal in the algebraic and the topological sense
(their quasi-norms are equivalent). The symbol $X\hookrightarrow Y$
($Y \hookleftarrow X$) means that $X\subset Y$ and the natural
embedding $\Id$ of $X$ in $Y$ is continuous, that is, there exist a
constant $c > 0$ such that $\|z\|_Y \le c\|z\|_X$ for all $z\in X$.
The best constant of the embedding $X\hookrightarrow Y$ is
$\|\Id\|_{X \rw Y}$.

The weighted local Morrey-type spaces $\LMv$ and weighted complementary
local Morrey-type spaces $\LMv$ are defined as follows: Let  $0 <p, \theta \le \infty$. 
Assume that $\o \in \mp^+ \I$ and $v \in\W(\rn)$.      
$$
\LMv : = \bigg\{f\in L_{p,v}^{\loc}(\rn):\, \| f \|_{\LMv} < \infty\bigg\}, \\
$$
where
$$
\| f \|_{\LMv} : = \big\| \|f\|_{p,v,\Br} \big\|_{\t,\o,(0,\infty)},
$$
and
$$
\LMvd :  = \bigg\{f\in \medcap_{t>0} L_{p,v}(\dual B(0,t)):\,\| f\|_{\LMvd}  < \infty\bigg\},
$$
where
$$
\| f\|_{\LMvd} : = \big\| \|f\|_{p,v,\Brd} \big\|_{\t,\o, (0,\infty)}.
$$
\begin{rem}
	In \cite{BurHus1} and \cite{BurGulHus1} it were proved that the
	spaces $\LM (\rn) : = \LM (\rn, \bf 1)$ and $\LMd (\rn) : = \LMd
	(\rn, \bf 1)$ are non-trivial, i.e. consists not only of functions
	equivalent to $0$ on $\rn$, if and only if
	\begin{equation}\label{11111}
	\|\o\|_{\t,(t,\i)} < \i, \qq \mbox{for some} \qq t >0,
	\end{equation}
	and
	\begin{equation}\label{1111111}
	\|\o\|_{\t,(0,t)} < \i,\qq \mbox{for some} \qq t >0,
	\end{equation}
	respectively. The same conclusion is true for $\LMv$ and $\LMvd$ for
	any $v \in \W(\rn)$.
\end{rem}

The proof of the following statement is straightforward.
\begin{lem} \label{shrinkagelemma}
	\rm{(i)} If $\|\o\|_{\t,(t_1,\infty)} = \infty$ for some $t_1 > 0$, then
	$$
	f\in \LMv \Rightarrow f = 0 \quad \mbox{a.e. on} \quad B(0,t_1).
	$$
	
	\rm{(ii)} If $\|\o\|_{\t,(0,t_2)} = \infty$ for some $t_2 > 0$, then
	$$
	f\in \LMvd \Rightarrow f = 0 \quad \mbox{a.e. on} \quad \dual B(0,t_2).
	$$
\end{lem}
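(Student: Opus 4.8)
The plan is to argue by contraposition, exploiting the monotonicity of the inner quasi-norm $r \mapsto \|f\|_{p,v,\Br}$ in the radius $r$. Since the ball $\Br = B(0,r)$ grows as $r$ increases, the map $r \mapsto \|f\|_{p,v,\Br}$ is non-decreasing on $\I$; dually, since the exterior domain $\Brd = \dual B(0,r)$ shrinks as $r$ increases, the map $r \mapsto \|f\|_{p,v,\Brd}$ is non-increasing. These two elementary monotonicity facts are the only structural input the proof needs, and they reduce each assertion to a one-line estimate on the outer $\t$-quasi-norm.

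For part (i), I would assume $f \in \LMv$ yet $f \ne 0$ on a subset of $B(0,t_1)$ of positive Lebesgue measure, aiming for a contradiction. Because $v$ is a weight, hence positive a.e.\ on $\rn$, this assumption forces $a := \|f\|_{p,v,B(0,t_1)} > 0$. By monotonicity, $\|f\|_{p,v,\Br} \ge a$ for every $r \ge t_1$, so that
\begin{equation*}
\| f \|_{\LMv} = \big\| \|f\|_{p,v,\Br} \big\|_{\t,\o,\I} \ge \big\| \|f\|_{p,v,\Br} \big\|_{\t,\o,(t_1,\i)} \ge a\, \|\o\|_{\t,(t_1,\i)}.
\end{equation*}
The hypothesis $\|\o\|_{\t,(t_1,\i)} = \i$ then gives $\| f \|_{\LMv} = \i$, contradicting $f \in \LMv$. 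Hence $\|f\|_{p,v,B(0,t_1)} = 0$, i.e.\ $f = 0$ a.e.\ on $B(0,t_1)$. Part (ii) is entirely analogous: assuming $b := \|f\|_{p,v,\dual B(0,t_2)} > 0$ and using $\|f\|_{p,v,\Brd} \ge b$ for all $0 < r \le t_2$ yields $\| f\|_{\LMvd} \ge b\, \|\o\|_{\t,(0,t_2)} = \i$, again a contradiction.

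The only point I expect to require a moment of care is the middle inequality $\big\| \|f\|_{p,v,\Br} \big\|_{\t,\o,(t_1,\i)} \ge a\,\|\o\|_{\t,(t_1,\i)}$ in the limiting case $\t = \i$, where the outer quasi-norm is an essential supremum rather than an integral. There one invokes the pointwise bound $\|f\|_{p,v,\Br}\,\o(r) \ge a\,\o(r)$ valid for a.e.\ $r > t_1$, and observes that taking essential suprema preserves it; for $\t < \i$ the same pointwise bound inserted under the integral sign suffices. This is the expected (minor) obstacle, and beyond it everything follows immediately from the monotonicity of the inner quasi-norm.
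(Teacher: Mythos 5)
Your argument is correct and is precisely the ``straightforward'' proof the paper has in mind (the paper omits it, stating only that the proof is straightforward): monotonicity of $r\mapsto\|f\|_{p,v,\Br}$ (resp.\ $r\mapsto\|f\|_{p,v,\Brd}$), restriction of the outer quasi-norm to $(t_1,\infty)$ (resp.\ $(0,t_2)$), and positivity of the weight $v$ to guarantee $a>0$. Your handling of the $\t=\i$ case is also the right (and only) point needing separate mention, so nothing is missing.
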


Let $0< \t \le \i$. We denote by
\begin{align*}
\Ot : & = \big\{\o \in \mp^+ \I:\, 0<\|\o\|_{\t,(t,\i)}<\i,~ t>0 \big\}, \\
\Otd : & = \big\{\o \in \mp^+ \I:\,0<\|\o\|_{\t,(0,t)}<\i,~ t>0 \big\}.	
\end{align*}

Let $v \in \W(\rn)$. It is easy to see that $\LMv$ and $\LMvd$ are
quasi-normed vector spaces when $\o \in \Ot$ and $\o \in \Otd$,
respectively.

The following statements are immediate consequences of Fubini's Theorem and were observed in \cite{BurHus1} and \cite{BurGulHus1}, for $v = 1$, respectively. 
\begin{lem}\label{LMpp}
	Let $0 < p \leq \infty$ and $v\in \W(\rn)$. Then 
	
	{\rm (i)} $LM_{pp,\o}(\rn,v)=L_p(w)$, where	$w(x) := v(x)\|\o\|_{p,(|x|,\infty)}, ~ x \in \rn$.
	
	{\rm (ii)} $\dual LM_{pp,\o}(\rn,v)=L_p(w)$, where	$w(x) := v(x)\|\o\|_{p,(0,|x|)}, ~ x \in \rn$.
\end{lem}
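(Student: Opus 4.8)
The plan is to unravel the two nested quasi-norms and reduce each identity to a single application of the Fubini--Tonelli theorem, the point being that the exponents of the inner and outer norms coincide once $\t = p$. Recall that by definition $\|f\|_{p,v,B(0,t)} = \|fv\|_{p,B(0,t)}$ and $\|g\|_{p,\o,\I} = \|g\o\|_{p,\I}$, so for $p<\i$ the weight $v$ enters to the $p$-th power inside the spatial integral and $\o$ to the $p$-th power in the outer one. Writing out the definition of $\|f\|_{LM_{pp,\o}(\rn,v)}$ and raising it to the power $p$ therefore gives, for (i),
\begin{equation*}
\|f\|_{LM_{pp,\o}(\rn,v)}^p = \int_0^\i \o(t)^p \bigg( \int_{B(0,t)} |f(x)|^p v(x)^p \, dx \bigg) \, dt.
\end{equation*}

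Since the integrand is non-negative and measurable on the region $\{(x,t) : |x| < t,\ t>0\}$, I would apply Tonelli's theorem to interchange the order of integration. For each fixed $x$ the admissible values of $t$ are exactly $t > |x|$, so the inner integral becomes $\int_{|x|}^\i \o(t)^p \, dt = \|\o\|_{p,(|x|,\i)}^p$, whence
\begin{equation*}
\|f\|_{LM_{pp,\o}(\rn,v)}^p = \int_{\rn} |f(x)|^p \big( v(x)\|\o\|_{p,(|x|,\i)} \big)^p \, dx = \|f\|_{p,w,\rn}^p,
\end{equation*}
with $w(x) = v(x)\|\o\|_{p,(|x|,\i)}$. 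This is the asserted identity of quasi-norms, from which the equality of the two spaces (algebraically and topologically, in fact with constant $1$) is immediate; the membership condition $f\in L_{p,v}^{\loc}(\rn)$ is subsumed. Part (ii) is identical, except that the complement $\dual B(0,t)$ restricts the Fubini region to $\{(x,t): |x| > t,\ t>0\}$, so for fixed $x$ the inner integral runs over $t\in(0,|x|)$ and produces $\|\o\|_{p,(0,|x|)}^p$, giving $w(x) = v(x)\|\o\|_{p,(0,|x|)}$.

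The one step that needs separate care --- and the only genuine obstacle --- is the endpoint $p=\i$, where Tonelli does not apply. Here the nested quasi-norms become, for (i),
\begin{equation*}
\|f\|_{LM_{\i\i,\o}(\rn,v)} = \esup_{t>0} \Big( \o(t)\, \esup_{|x|<t} |f(x)|\,v(x) \Big),
\end{equation*}
and the identity to be proved reduces to the interchange of iterated essential suprema
\begin{equation*}
\esup_{t>0} \Big( \o(t)\, \esup_{|x|<t} h(x) \Big) = \esup_{x\in\rn} \Big( h(x)\, \esup_{t>|x|} \o(t) \Big), \qquad h := |f|\,v.
\end{equation*}
I would justify this by recognizing both sides as the joint essential supremum of $h(x)\o(t)$ over the product-measurable set $\{(x,t): |x|<t\}$ and invoking the standard Fubini-type theorem for essential suprema (iterated $\esup$ equals joint $\esup$ on a $\sigma$-finite product space). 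For (ii) the same argument applies over $\{(x,t): |x|>t\}$, yielding $w(x) = v(x)\|\o\|_{\i,(0,|x|)}$. The measurability of $x \mapsto \|\o\|_{p,(|x|,\i)}$ and $x \mapsto \|\o\|_{p,(0,|x|)}$, needed to identify $w$ as a genuine weight, follows from the monotonicity of these quantities in $|x|$.
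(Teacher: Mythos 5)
Your proof is correct and follows exactly the route the paper indicates: the paper gives no written proof but states that the lemma is an immediate consequence of Fubini's theorem, which is precisely your Tonelli interchange over the region $\{|x|<t\}$ (resp.\ $\{|x|>t\}$). Your separate treatment of the endpoint $p=\infty$ via the interchange of iterated essential suprema is a sound extra precaution that the paper leaves implicit.
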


Recall that the embeddings between weighted local Morrey-type spaces and weighted Lebesgue spaces, that
is, the embeddings 
\begin{align}
L_{p_1}(v_1) & \hra LM_{p_2 \t, \o}(\rn,v_2), \label{emb1}\\
L_{p_1}(v_1) & \hra \dual{LM}_{p_2 \t, \o}(\rn,v_2), \label{emb2}\\
L_{p_1}(v_1) & \hookleftarrow LM_{p_2\t, \o}(\rn,v_2), \label{emb3}\\
L_{p_1}(v_1) & \hookleftarrow \dual{LM}_{p_2 \t, \o}(\rn,v_2)
\label{emb4}
\end{align}
are completely characterized in \cite{mu_emb}.

Our principal goal in this paper is to investigate the embeddings
between weighted complementary local Morrey-type spaces and weighted local Morrey type spaces and vice versa, that is, the embeddings
\begin{align}
\dual LM_{p_1 \t_1, \o_1}(\rn,v_1) & \hra LM_{p_2 \t_2, \o_2}(\rn,v_2), \label{mainemb1}\\
LM_{p_1 \t_1, \o_1}(\rn,v_1) & \hra \dual LM_{p_2 \t_2, \o_2}(\rn,v_2).
\label{mainemb2}
\end{align}
An approach used in this paper consist of a duality argument combined with
estimates of optimal constants of embeddings \eqref{emb1} - \eqref{emb4},
which reduce the problem to the solutions of the iterated Hardy-type
inequalities  
\begin{equation}\label{eq.4.11}
\big\| \|H^* f\|_{p,u,(0,\cdot)}\big\|_{q,w,\I}\leq c
\,\|f\|_{\theta,v,\I},~f \in \M^+\I,
\end{equation}
with
$$
(H^*f)(t):= \int_t^{\infty} f(\tau) \, d\tau, \quad t > 0,
$$
where $u,\,v,\,w$ are weights on $(0,\infty)$ and $0 < p,\,q \le \infty$, $1 < \theta < \infty$. There exists different solutions of these inequalities. We will use  characterizations from  \cite{gmp} and \cite{gop}. 

Note that in view of Lemma \ref{LMpp}, embeddings \eqref{mainemb1} - \eqref{mainemb2} contain embeddings \eqref{emb1} - \eqref{emb4} as a special case. Moreover, by the change of
variables $x = {y} / {|y|^2}$ and $t={1} / {\tau}$, it is easy to see that \eqref{mainemb2} is equivalent to the embedding
$$
\dual LM_{p_1\t_1,\tilde \o_1}(\rn,\tilde{v}_1) \hra
LM_{p_2\t_2,\tilde \o_2}(\rn,\tilde{v}_2),
$$
where $\tilde{v}_i (y) = v_i(y/|y|^2)|y|^{-2n/p_i}$ and
$\tilde{\o}_i (\tau) = \tau^{- {2} / {\t_i}} \o_i\big({1} / {\tau}\big)$,
$i=1,2$. This note allows us to concentrate our attention
on characterization of \eqref{mainemb1}. On the negative side of
things we have to admit that the duality approach works only in the
case when, in \eqref{mainemb1} - \eqref{mainemb2}, one has $p_2 \le
\theta_2$. Unfortunately, in the case when $p_2 > \theta_2$ the
characterization of these embeddings remains open.

In particular, we obtain two-sided estimates of the optimal constant
$c$ in the inequality
\begin{equation*}
\bigg( \int_0^{\infty} \bigg( \int_{\Bt} f(x)^{p_2}v_2(x)\,dx \bigg)^{\frac{q_2}{p_2}} u_2(t)\,dt\bigg)^{\frac{1}{q_2}} \le c \bigg( \int_0^{\infty} \bigg( \int_{\Btd} f(x)^{p_1} v_1(x)\,dx\bigg)^{\frac{q_1}{p_1}} u_1(t)\,dt\bigg)^{\frac{1}{q_1}},
\end{equation*}
where $p_1,\,p_2,\,q_1,\,q_2 \in (0,\infty)$, $p_2 \le q_2$ and $u_1,\,u_2$ and $v_1,\,v_2$ are weights on $\I$ and $\rn$, respectively.

The paper is organized as follows. We start with formulations of our main results  in Section~\ref{main.res.}. The proofs of the main results are presented in Section \ref{s7}.

%%%%%%%%%%%%%%%%%%%%%%%%%%%%%%%%%%%%%%%%%%%%%%%%%%%%%%%%%%%%%%%%%%%%%%%%%%%%%%%%%%%%%%%%%%%%%%%%%%%%%%%%%%%%%%%%%%%%%%%%%%%%%%%%%%%%%%%%%%%%%%%%%%%%%%%%%%%%%%%%%%%%%%%%%%%%%%%%%%%%%%%%%%%%%%%%%%%%%%%%%%%%%%%%%%%%%%%%%%%%%%%%%%%%%%%%%%%%%%%%%%%%%%%%%%%%%%%%%%%%%%%%%%%%%%%%%%%%%%%%%%%%%%%%%%%%%%%%%%%%%%%%%%%%%%%%%%%%%%%%%%%%%%%%%%%%%%%%%%%%%%%%%%%%%%%%%%%%%%%%%%%%%%%%%%%%%%%%%%%%%%%%%%%%%%%%%%%%%%%%%%%%%%%%%%%%%%%%%%%%%%%%%%%%%%%%%%%%%%%%%%%%%%%%%%%%%%%%%%%%%%%%%%%%%%%%%%%%%%%%%%%%%%%%%%%%%%%%%%%%%%%%%%%%%%%%%%%%%%%%%%%%%%%%%%%%%%%%%%%%%%%%%%%%%%%%%%%%%%%%%%%%%%%%%%%%%%%%%%%%%%%%%%%%%%%%%%%%%%%%%%%%%%%%%%%%%%%%%%%%%%%%%%%%%%%%%%%%%%%%%%%%%%%%%%%%%%%%%%%%%%%%%%%%%%%%%%%%%%%%%%%%%%%%%%%%%%%%%%%%%%%%%%%%%%%%%%%%%%%%%%%%%%%%%%%%%%%%%%%%%%%%%%%%%%%%%%%%%%%%%%%%%%%%%%%%%%%%%%%%%%%%%%%%%%%%%%%%%%%%

\section{Statement of the main results}\label{main.res.}

We adopt the following usual conventions.
\begin{conv}\label{Notat.and.prelim.conv.1.1}
	{\rm (i)} Throughout the paper we put $0/0 = 0$, $0 \cdot (\pm \i) =
	0$ and $1 / (\pm\i) =0$.
	
	{\rm (ii)} We put
	$$
	p' : = \left\{\begin{array}{cl} \frac p{1-p} & \text{if} \quad 0<p<1,\\
	\infty &\text{if}\quad p=1, \\
	\frac p{p-1}  &\text{if}\quad 1<p<\infty,\\
	1  &\text{if}\quad p=\infty.
	\end{array}
	\right.
	$$
	
	{\rm (iii)} To state our results we use the notation $p \rw q$ for $0 < p,\,q
	\le \infty$ defined by
	$$
	\frac{1}{p \rw q} = \frac{1}{q} - \frac{1}{p} \qq \mbox{if} \qq q <
	p,
	$$
	and $p \rw q = \infty$ if $q \ge p$.
	
	{\rm (iv)} If $I = (a,b) \subseteq \R$ and $g$ is a monotone
	function on $I$, then by $g(a)$ and $g(b)$ we mean the limits
	$\lim_{t\rw a+}g(t)$ and $\lim_{t\rw b-}g(t)$, respectively.
\end{conv}

Our main results are the following theorems.
Throughout the paper we will denote
$$
\widetilde{V}(x) : = \| v_1^{-1}v_2\|_{p_1 \rw 	p_2,B(0,x)}, \quad \mbox{and} \quad {\mathcal V}(t,x):= \frac{\widetilde{V}(t)}{\widetilde{V}(t)+\widetilde{V}(x)} ~ (t > 0,\,x > 0).
$$
\begin{thm}\label{main01}
	Let $0 < \t_2 = p_2 \leq p_1 = \t_1 < \infty$. Assume that $v_1, v_2\in \W(\rn)$, $\o_1\in \dual{\O_{\t_1}}$ and $\o_2 \in \O_{\t_2}$. Then
	\begin{equation*}
	\|\Id\|_{\dual LM_{p_1\t_1,\o_1}(\rn,v_1)\rw LM_{p_2\t_2,\o_2}(\rn,v_2)}\ap \big\|\|\o_1\|_{p_1,(0,|\cdot|)}^{-1} \,\, \|\o_2\|_{p_2,(|\cdot|,\infty)} \big\|_{p_1\rw p_2,v_1^{-1}v_2,\rn}.
	\end{equation*}
\end{thm}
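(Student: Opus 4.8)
The plan is to exploit the equalities $\t_1 = p_1$ and $\t_2 = p_2$, which collapse both Morrey-type spaces onto weighted Lebesgue spaces, and then to invoke the elementary (and sharp) description of embeddings between such spaces. First I would apply Lemma~\ref{LMpp}. Since $\o_1\in\dual{\O_{\t_1}}$ and $\o_2\in\O_{\t_2}$, the functions
$$
w_1(x) := v_1(x)\,\|\o_1\|_{p_1,(0,|x|)}, \qquad w_2(x) := v_2(x)\,\|\o_2\|_{p_2,(|x|,\infty)}
$$
are genuine weights on $\rn$, and part~(ii) of the lemma gives $\dual LM_{p_1 p_1,\o_1}(\rn,v_1) = L_{p_1}(w_1)$ while part~(i) gives $LM_{p_2 p_2,\o_2}(\rn,v_2) = L_{p_2}(w_2)$. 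A direct application of Fubini's theorem moreover shows that these identifications preserve the (quasi-)norms exactly, so
$$
\|\Id\|_{\dual LM_{p_1\t_1,\o_1}(\rn,v_1)\rw LM_{p_2\t_2,\o_2}(\rn,v_2)} = \|\Id\|_{L_{p_1}(w_1)\rw L_{p_2}(w_2)}.
$$

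Next I would compute the right-hand norm. Writing $g = f w_1$, the defining inequality $\|f w_2\|_{p_2}\le c\,\|f w_1\|_{p_1}$ becomes $\|g\,(w_2/w_1)\|_{p_2}\le c\,\|g\|_{p_1}$ for all $g$, i.e. the embedding $L_{p_1}\hra L_{p_2}(w_2/w_1)$. Because $p_2\le p_1$, the sharp form of H\"older's inequality with exponents $p_1/p_2$ and its conjugate yields that the best constant is exactly $\|w_2/w_1\|_{p_1\rw p_2}$: the conjugate exponent produces precisely the power $p_1\rw p_2 = p_1 p_2/(p_1-p_2)$ on $w_2/w_1$, and in the degenerate case $p_2 = p_1$ the convention $p_1\rw p_2 = \infty$ returns the correct essential-supremum answer. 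Substituting the formulas for $w_1,w_2$ and extracting the factor $v_1^{-1}v_2$ into the weight of the norm gives
$$
\Big\|\tfrac{w_2}{w_1}\Big\|_{p_1\rw p_2,\rn} = \big\|\,\|\o_1\|_{p_1,(0,|\cdot|)}^{-1}\,\|\o_2\|_{p_2,(|\cdot|,\infty)}\,\big\|_{p_1\rw p_2,\,v_1^{-1}v_2,\,\rn},
$$
which is exactly the asserted right-hand side.

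I do not expect any genuine obstacle here: this theorem is the base case that calibrates the duality method of the paper, and both steps above are in fact equalities, so the displayed $\ap$ is really an identity. The only care needed is bookkeeping --- confirming that the Fubini identifications of Lemma~\ref{LMpp} are weight-independent and norm-preserving, and that the membership hypotheses $\o_1\in\dual{\O_{\t_1}}$, $\o_2\in\O_{\t_2}$ guarantee $w_1,w_2\in\W(\rn)$ so that the reduction to $L_{p_1}(w_1)\hra L_{p_2}(w_2)$ is legitimate.
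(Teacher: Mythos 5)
Your proposal is correct and follows essentially the same route as the paper, which disposes of Theorem~\ref{main01} in one line by observing that Lemma~\ref{LMpp} collapses both spaces to weighted Lebesgue spaces and then citing the known characterization of such embeddings from \cite{mu_emb}. The only difference is that you make the final step self-contained by carrying out the sharp H\"older computation for $L_{p_1}(w_1)\hra L_{p_2}(w_2)$ directly, which is a harmless (and correct) substitute for the citation.
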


\begin{thm}\label{main02}
	Let $0 < p_1, \,p_2, \,\t_1, \,\t_2 < \infty$ and $\t_2 \neq p_2 \leq p_1 = \t_1$.  Assume that $v_1, v_2\in \W(\rn)$, $\o_1\in \dual{\O_{\t_1}}$ and $\o_2 \in \O_{\t_2}$.
	
	\rm{(i)} If $p_1\le \t_2$, then
	\begin{equation*}
	\|\Id\|_{\dual LM_{p_1\t_1,\o_1}(\rn,v_1)\rw LM_{p_2\t_2,\o_2}(\rn,v_2)} \ap \sup_{t\in (0,\infty)}  \big\| \|\o_1\|_{p_1,(0,|\cdot|)}^{-1} \big\|_{p_1 \rw  p_2,v_1^{-1}v_2,B(0,t)} \|\o_2\|_{\t_2,(t,\infty)};
	\end{equation*}
	
	\rm{(ii)} If $\t_2 < p_1$, then
	\begin{align*}
	\|\Id\|_{\dual LM_{p_1\t_1,\o_1}(\rn,v_1)\rw LM_{p_2\t_2,\o_2}(\rn,v_2)} 
	\ap \bigg(\int_0^{\infty} \big\| \|\o_1\|_{p_1,(0,|\cdot|)}^{-1} \big\|_{p_1 \rw p_2,v_1^{-1}v_2,B(0,t)}^{p_1\rw \t_2} \,d \bigg(-\|\o_2\|_{\t_2,(t,\infty)}^{p_1\rw \t_2} \bigg) \bigg)^{\frac{1}{p_1\rw \t_2}}.
	\end{align*}
\end{thm}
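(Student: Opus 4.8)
The plan is to use the hypothesis $\theta_1=p_1$ to collapse the source space to a weighted Lebesgue space, which reduces \eqref{mainemb1} to the already characterized embedding \eqref{emb1}. Since $\omega_1\in\dual{\Omega_{\theta_1}}$ and $\theta_1=p_1$, the function $w_1(x):=v_1(x)\,\|\omega_1\|_{p_1,(0,|x|)}$ is positive and finite a.e.\ on $\rn$, hence a genuine weight; by Lemma~\ref{LMpp}(ii) we then have $\dual LM_{p_1\theta_1,\omega_1}(\rn,v_1)=L_{p_1}(w_1)$ with equivalent quasi-norms. In contrast to Theorem~\ref{main01}, here $\theta_2\neq p_2$ keeps the target a genuine local Morrey-type space, so only the source collapses, and
$$\|\Id\|_{\dual LM_{p_1\theta_1,\omega_1}(\rn,v_1)\rw LM_{p_2\theta_2,\omega_2}(\rn,v_2)}\approx\|\Id\|_{L_{p_1}(w_1)\rw LM_{p_2\theta_2,\omega_2}(\rn,v_2)},$$
so the whole problem becomes a single instance of \eqref{emb1}.

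Next I would invoke the characterization of \eqref{emb1} from \cite{mu_emb} in the regime $p_2\le p_1$. There the best constant in
$$\big\|\,\|f\|_{p_2,v_2,B(0,\cdot)}\,\big\|_{\theta_2,\omega_2,(0,\infty)}\le c\,\|f\|_{p_1,w_1,\rn}$$
is comparable to $\sup_{t>0}\|w_1^{-1}v_2\|_{p_1\rw p_2,B(0,t)}\,\|\omega_2\|_{\theta_2,(t,\infty)}$ when $p_1\le\theta_2$, and to $\big(\int_0^\infty\|w_1^{-1}v_2\|_{p_1\rw p_2,B(0,t)}^{\,p_1\rw\theta_2}\,d\big(-\|\omega_2\|_{\theta_2,(t,\infty)}^{\,p_1\rw\theta_2}\big)\big)^{1/(p_1\rw\theta_2)}$ when $\theta_2<p_1$. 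This is the usual dichotomy for Hardy-type inequalities whose inner operator is the monotone ball-average $f\mapsto\|f\|_{p_2,v_2,B(0,\cdot)}$: a supremum ($L_\infty$-type) condition governs the range where the outer exponent $\theta_2$ dominates $p_1$, while the opposite range is governed by an integral condition with the balance exponent $p_1\rw\theta_2$.

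It then remains to re-express everything through $v_1,v_2,\omega_1,\omega_2$. Writing $w_1^{-1}=v_1^{-1}\|\omega_1\|_{p_1,(0,|\cdot|)}^{-1}$ and using $\|g\|_{p_1\rw p_2,v_1^{-1}v_2,B(0,t)}=\|g\,v_1^{-1}v_2\|_{p_1\rw p_2,B(0,t)}$ from the definition of the weighted functional, one gets
$$\|w_1^{-1}v_2\|_{p_1\rw p_2,B(0,t)}=\big\|\,\|\omega_1\|_{p_1,(0,|\cdot|)}^{-1}\,\big\|_{p_1\rw p_2,v_1^{-1}v_2,B(0,t)},$$
and substituting this into the two expressions of the previous paragraph yields exactly formulas (i) and (ii).

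The genuine content sits in \cite{mu_emb}, so the main obstacle here is matching hypotheses rather than estimation: one must check that $p_2\le p_1$ selects precisely the branch of the \eqref{emb1} characterization used above, and that its two subcases are split according to whether $p_1\le\theta_2$ or $\theta_2<p_1$ (and not by some comparison involving $p_2$ or a three-parameter balance). I would also verify the degenerate situations permitted by Convention~\ref{Notat.and.prelim.conv.1.1}, namely $p_1\rw p_2=\infty$ when $p_2=p_1$, and that $\omega_2\in\Omega_{\theta_2}$ makes $t\mapsto-\|\omega_2\|_{\theta_2,(t,\infty)}^{\,p_1\rw\theta_2}$ nondecreasing so that the Stieltjes measure in (ii) is well defined; none of these should present real difficulty.
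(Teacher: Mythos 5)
Your proposal is correct and follows exactly the route the paper takes: it notes that Theorems \ref{main01}--\ref{main03} are straightforward consequences of Lemma \ref{LMpp} combined with the characterizations of embeddings \eqref{emb1}--\eqref{emb4} from \cite{mu_emb}, which is precisely your reduction of the source space to $L_{p_1}(w_1)$ via Lemma \ref{LMpp}(ii) followed by an appeal to the known characterization of \eqref{emb1}. Your version merely spells out the weight substitution $w_1^{-1}v_2 = v_1^{-1}v_2\,\|\o_1\|_{p_1,(0,|\cdot|)}^{-1}$ that the paper leaves implicit.
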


\begin{thm}\label{main03}
	Let $0<p_1, \,p_2, \,\t_1, \,\t_2 < \infty$ and $\t_2 = p_2 \leq p_1 \neq \t_1$. Assume that $v_1, v_2\in \W(\rn)$, $\o_1\in \dual{\O_{\t_1}}$ and $\o_2 \in \O_{\t_2}$.
	
	{\rm (i)} If $\t_1 \le p_2$, then
	\begin{equation*}
	\|\Id\|_{\dual LM_{p_1\t_1,\o_1}(\rn,v_1)\rw LM_{p_2\t_2,\o_2}(\rn,v_2)}\ap 
	\sup_{t\in (0,\infty)} \|\o_1\|_{\t_1,(0,t)}^{-1} \, \big\| \|\o_2\|_{p_2,(|\cdot|,\infty)} \big\|_{p_1\rw p_2,v_1^{-1}v_2,B(0,t)};
	\end{equation*}
	
	{\rm (ii)} If $p_2 < \t_1$, then
	\begin{align*}
	\|\Id\|_{\dual LM_{p_1\t_1,\o_1}(\rn,v_1)\rw LM_{p_2\t_2,\o_2}(\rn,v_2)}  \ap & \,\, \bigg(\int_0^\infty \big\| \|\o_2\|_{p_2,(|\cdot|,\infty)} \big\|_{p_1\rw p_2,v_1^{-1}v_2,B(0,t)}^{\t_1\rw p_2}  d\,\bigg(-\|\o_1\|_{\t_1,(0,t)}^{-\t_1\rw p_2}\bigg)\bigg)^{\frac{1}{\t_1\rw p_2}} \\
	& + \|\o_1\|_{\t_1,(0,\infty)}^{-1} \big\| \|\o_2\|_{p_2,(|\cdot|,\infty)}\big\|_{p_1\rw p_2,v_1^{-1}v_2,\rn}.
	\end{align*}
\end{thm}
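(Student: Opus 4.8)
The plan is to reduce Theorem \ref{main03} to the already known characterization of the embedding \eqref{emb4}, exploiting the collapse $\t_2 = p_2$ of the target. First I would apply Lemma \ref{LMpp}(i) to identify
$$
LM_{p_2 p_2,\o_2}(\rn,v_2) = L_{p_2}(w_2), \qquad w_2(x) := v_2(x)\,\|\o_2\|_{p_2,(|x|,\infty)},
$$
with equivalent quasi-norms; this is the Fubini computation underlying Lemma \ref{LMpp}. The hypothesis $\o_2\in\O_{\t_2}=\O_{p_2}$ is exactly what forces $\|\o_2\|_{p_2,(|x|,\infty)}$ to be positive and finite for every $|x|>0$, so that $w_2\in\W(\rn)$ and $L_{p_2}(w_2)$ is well defined, while $\o_1\in\dual{\O_{\t_1}}$ makes the source a genuine quasi-normed space. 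Under this identification the embedding \eqref{mainemb1} in question becomes
$$
\dual LM_{p_1\t_1,\o_1}(\rn,v_1)\hra L_{p_2}(w_2),
$$
which is precisely an instance of \eqref{emb4}, with inner exponent $p_1$, outer exponent $\t_1$, window weight $\o_1$, spatial weight $v_1$, and target data $p_2$, $w_2$.

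Next I would quote the characterization of \eqref{emb4} from \cite{mu_emb} and specialize the parameters. Because we assume $p_2\le p_1$, the mixed quasi-norm occurring there is always the one of index $p_1\rw p_2$, and the sole remaining dichotomy is the position of the outer exponent $\t_1$ relative to $p_2$: the regime $\t_1\le p_2$ produces a supremum, whereas $p_2<\t_1$ produces an integral together with a boundary term. This is exactly the split into parts (i) and (ii). What then remains is bookkeeping: substituting $w_2=v_2\,\|\o_2\|_{p_2,(|\cdot|,\infty)}$, folding the ratio $v_1^{-1}w_2$ into the weighted mixed quasi-norm $\big\|\|\o_2\|_{p_2,(|\cdot|,\infty)}\big\|_{p_1\rw p_2,v_1^{-1}v_2,B(0,t)}$, and recognizing $\|\o_1\|_{\t_1,(0,t)}$ (and its reciprocal) as the functional that the complementary window of the source contributes, after which the displayed formulas in (i) and (ii) read off directly.

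Should a self-contained route be preferred, I would instead reduce $\dual LM_{p_1\t_1,\o_1}(\rn,v_1)\hra L_{p_2}(w_2)$ to a one-dimensional weighted Hardy inequality for the non-increasing function $t\mapsto\|f\|_{p_1,v_1,\dual B(0,t)}$. Testing against functions supported on annuli $\{t<|x|<s\}$ yields the lower bounds, while the upper bounds follow by decomposing $\rn$ dyadically in $|x|$, summing, and using the monotonicity of that function together with the two classical regimes ($\t_1\le p_2$ and $p_2<\t_1$) of the scalar Hardy operator $H^*$.

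The main obstacle I expect is the faithful transcription of the characterization of \eqref{emb4}: one has to match the conventions of \cite{mu_emb} (placement of weights and the precise normalization of the mixed quasi-norms $\|\cdot\|_{p_1\rw p_2,v_1^{-1}v_2,B(0,t)}$) and, in part (ii), to retain the boundary term $\|\o_1\|_{\t_1,(0,\infty)}^{-1}\big\|\|\o_2\|_{p_2,(|\cdot|,\infty)}\big\|_{p_1\rw p_2,v_1^{-1}v_2,\rn}$, a limiting contribution that is easy to drop. The conceptual point worth checking is that, since the target exponents coincide ($\t_2=p_2$), the would-be iterated Hardy inequality genuinely degenerates to the single scalar Hardy criterion used for \eqref{emb4}, rather than to a strictly larger iterated quantity; this is what legitimizes the whole reduction.
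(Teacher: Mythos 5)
Your proposal matches the paper's proof: the authors state that Theorems \ref{main01}--\ref{main03} follow directly from Lemma \ref{LMpp} combined with the characterizations of the embeddings \eqref{emb1}--\eqref{emb4} in \cite{mu_emb}, which is exactly your reduction of the target $LM_{p_2p_2,\o_2}(\rn,v_2)$ to $L_{p_2}(w_2)$ with $w_2=v_2\,\|\o_2\|_{p_2,(|\cdot|,\infty)}$ followed by an appeal to the known characterization of \eqref{emb4} split according to $\t_1\le p_2$ versus $p_2<\t_1$. Your bookkeeping of the weights and the retention of the boundary term in part (ii) are consistent with the stated formulas, so the argument is correct and essentially identical to the paper's.
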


In view of Lemma \ref{LMpp}, Theorems \ref{main01} - \ref{main03} are straightforward consequences of \cite[Theorem~3.1]{mu_emb} and \cite[Theorem~4.2]{mu_emb}.

To state further results we need the following definitions.
\begin{defi}
	Let $U$ be a continuous, strictly increasing function on $[0,\i)$ such that $U(0)=0$ and $\lim_{t\rw\i}U(t)=\i$. Then we say that $U$ is admissible.
\end{defi}

Let $U$ be an admissible function. We say that a function $\vp$ is
$U$-quasiconcave if $\vp$ is equivalent to an increasing function on
$(0,\i)$ and ${\vp} / {U}$ is equivalent to a decreasing function on
$(0,\infty)$. We say that a $U$-quasiconcave function $\vp$ is
non-degenerate if
$$
\lim_{t\rw 0+} \vp(t) = \lim_{t\rw \i} \frac{1}{\vp(t)} = \lim_{t\rw
	\i} \frac{\vp(t)}{U(t)} = \lim_{t\rw 0+} \frac{U(t)}{\vp(t)} =0.
$$
The family of non-degenerate $U$-quasiconcave functions is denoted
by $Q_U$.
\begin{defi}
	Let $U$ be an admissible function, and let $w$ be a non-negative measurable function on $(0,\i)$. We say that the function $\vp$, defined by
	\begin{equation*}
	\vp(t)=U(t)\int_0^{\infty} \frac{w(\tau)\,d\tau}{U(\tau)+U(t)}, \qq t\in (0,\i),
	\end{equation*}
	is a fundamental function of $w$ with respect to $U$. One will also say that $w(\tau)\,d\tau$ is a representation measure of $\vp$ with respect to $U$.
\end{defi}

\begin{rem}\label{nondegrem}
	Let $\vp$ be the fundamental function of $w$ with
	respect to $U$.
	Assume that
	\begin{equation*}
	\int_0^{\infty}\frac{w(\tau)\,d\tau}{U(\tau)+U(t)}<\i, ~ t> 0, \qq \int_0^1 \frac{w(\tau)\,d\tau}{U(\tau)}=\int_1^{\infty}w(\tau)\,d\tau=\infty.
	\end{equation*}
	Then $\vp\in Q_{U}$.
\end{rem}

\begin{rem}\label{limsupcondition}
	Suppose that $\vp (x) < \i$ for all $x \in (0,\i)$, where $\vp$ is
	defined by
	\begin{equation*}
	\vp(x)=\esup_{t\in(0,x)}{U(t)}
	\esup_{\tau\in(t,\i)}\frac{w(\tau)}{U(\tau)},~~t\in\I.
	\end{equation*}
	If
	$$
	\limsup_{t \rightarrow 0 +} w(t) = \limsup_{t \rightarrow +\infty} \frac{1}{w(t)} = \limsup_{t \rightarrow 0 +} \frac{U(t)}{w(t)} = \limsup_{t \rightarrow +\infty} \frac{w(t)}{U(t)} = 0,
	$$
	then
	$\vp\in Q_{U}$.
\end{rem}

\begin{thm}\label{maintheorem1}
	Let $0<p_1, \,p_2, \,\t_1, \,\t_2 < \infty$, $p_2 < p_1$, $\t_1 \leq p_2 < \t_2$. Assume that $v_1, v_2\in \W(\rn)$, $\o_1 \in \dual{\O_{\t_1}}$ and $\o_2\in \O_{\t_2}$. 
	Suppose that $\widetilde{V}$ is admissible and
	\begin{equation*}
	\vp_1(x):= \sup_{t\in (0,\infty)} \widetilde{V}(t) \, {\mathcal V}(x,t) \,\, \|\o_1\|_{\t_1,(0,t)}^{-1} \in Q_{ \widetilde{V}^{\frac{1}{p_1 \rw p_2}}}.
	\end{equation*}
	
	{\rm (i)} If $p_1 \le \t_2$, then
	\begin{equation*}
	\|\Id\|_{\dual LM_{p_1\t_1,\o_1}(\rn,v_1)\rw LM_{p_2\t_2,\o_2}(\rn,v_2)}\ap \sup_{x\in(0,\infty)} \vp_1(x) \sup_{t\in (0,\infty)} {\mathcal V} (t,x) \,\, \|\o_2\|_{\t_2,(t,\infty)}.
	\end{equation*}
	
	{\rm (ii)} If $\t_2 < p_1$, then
	\begin{align*}
	\|\Id\|_{\dual LM_{p_1\t_1,\o_1}(\rn,v_1)\rw LM_{p_2\t_2,\o_2}(\rn,v_2)} \ap \sup_{x\in(0,\infty)} \vp_1(x) \bigg(\int_0^\infty  {\mathcal V}(t,x)^{p_1\rw \t_2}\,d\bigg(-\|\o_2\|_{\t_2,(t,\infty)}^{p_1\rw \t_2}\bigg) \bigg)^{\frac{1}{p_1\rw\t_2}}.
	\end{align*}
\end{thm}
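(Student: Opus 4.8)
The plan is to express the embedding constant as the best constant in an iterated Hardy-type inequality \eqref{eq.4.11} and then to appeal to the characterizations in \cite{gmp} and \cite{gop}. By definition the norm $\|\Id\|_{\dual LM_{p_1\t_1,\o_1}(\rn,v_1)\rw LM_{p_2\t_2,\o_2}(\rn,v_2)}$ is the least $c$ such that
\begin{equation*}
\big\| \|f\|_{p_2,v_2,\Br}\big\|_{\t_2,\o_2,\I} \le c\,\big\| \|f\|_{p_1,v_1,\Brd}\big\|_{\t_1,\o_1,\I}
\end{equation*}
holds for every admissible $f$. Since $p_2<\t_2$, I would first dualize the outer quasi-norm on the left: writing $s=\t_2/p_2>1$ and using the associate space of $L_s$, one has $\big\| \|f\|_{p_2,v_2,\Br}\big\|_{\t_2,\o_2,\I}^{p_2}$ equal to the supremum, over $g\ge 0$ in the unit ball of $L_{s'}(\o_2^{-p_2})$, of $\int_0^\infty \|f\|_{p_2,v_2,\Bt}^{p_2}\,g(t)\,dt$. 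Fubini's theorem rewrites this pairing as $\int_{\rn} f(x)^{p_2}v_2(x)^{p_2}\,(H^*g)(|x|)\,dx$, so the tail operator $H^*$ from \eqref{eq.4.11} enters exactly here.

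For each fixed admissible $g$ the last integral is the weighted Lebesgue quasi-norm $\|f\|_{p_2,V_g}^{p_2}$ with $V_g(x)=v_2(x)\,(H^*g)(|x|)^{1/p_2}$. Thus, uniformly in $g$, the problem reduces to estimating the optimal constant of the embedding $\dual LM_{p_1\t_1,\o_1}(\rn,v_1)\hra L_{p_2}(V_g)$, which is an embedding of the form \eqref{emb4}. I would insert here the explicit two-sided estimate of \cite[Theorem~4.2]{mu_emb}; in the present range $\t_1\le p_2<p_1$ this bound is governed by $\|\o_1\|_{\t_1,(0,\cdot)}^{-1}$ together with the quantity $\widetilde V$ measuring $v_1^{-1}v_2$ over balls, which is precisely how $\widetilde V$ and $\mathcal V$ are forced into the final formula.

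It then remains to maximize the resulting estimate over $g$ in the unit ball. Re-applying duality in the opposite direction, this supremum collapses to the best constant in an iterated Hardy inequality \eqref{eq.4.11} whose admissible function is $U=\widetilde{V}^{\frac{1}{p_1 \rw p_2}}$ and whose fundamental function is exactly $\vp_1$; the standing assumption $\vp_1\in Q_{\widetilde{V}^{\frac{1}{p_1 \rw p_2}}}$ supplies the non-degeneracy (cf. Remark~\ref{limsupcondition}) needed to apply \cite{gmp} and \cite{gop}. The two parts of the theorem match the two regimes of that characterization: for $p_1\le\t_2$ the outer exponent dominates and the characterization is of supremum type, giving part (i), while for $\t_2<p_1$ one falls into the regime that produces the integral expression of part (ii).

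The main obstacle will be the combination of the second step with the passage to the supremum over $g$: the constant furnished by \cite[Theorem~4.2]{mu_emb} has to be recorded in a form whose dependence on $V_g$, and hence on $H^*g$, is linear enough that taking $\sup_g$ reproduces a genuine iterated Hardy functional rather than an intractable two-parameter object, and one must check that the nested weights---$\widetilde V$ from the inner Lebesgue embedding and $\vp_1$ from solving the iterated inequality---assemble into precisely the quasiconcave data required by \cite{gmp} and \cite{gop}. Verifying this compatibility, rather than any single estimate, is the technical heart of the argument.
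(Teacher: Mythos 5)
Your proposal follows essentially the same route as the paper: the duality/Fubini step is exactly Lemma~\ref{mainlemma}, the reduction of the inner embedding is \cite[Theorem~4.2, (a)]{mu_emb} (applicable since $\t_1\le p_2$), and after passing to polar coordinates the supremum over $g$ is resolved by the iterated Hardy characterization of \cite[Theorem~3.2]{gmp}, whose two cases give parts (i) and (ii). The only detail you leave implicit is the explicit polar-coordinate reduction of $\|H^*g(|\cdot|)\|_{\frac{p_1}{p_1-p_2},(v_1^{-1}v_2)^{p_2},B(0,t)}$ to a one-dimensional weighted norm, which is what makes the one-dimensional results of \cite{gmp} applicable; otherwise the plan matches the paper's proof.
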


\begin{thm}\label{maintheorem3}
	Let $0 < p_1, \,p_2, \,\t_1, \,\t_2 < \infty$,\ $p_2 < p_1$ and $p_2 < \min\{\t_1,\t_2\}$. Assume that $v_1, v_2\in \W(\rn)$, $\o_1 \in	\dual{\O_{\t_1}}$ and $\o_2\in \O_{\t_2}$.
	Suppose that $\widetilde{V}$ is admissible and
	\begin{equation*}
	\vp_2(x):= \bigg(\int_0^{\infty} [\widetilde{V}(t){\mathcal V}(x,t)]^{\t_1\rw p_2}\,d\bigg(-\|\o_1\|_{\t_1,(0,t)}^{-\t_1\rw p_2}\bigg)\bigg)^{\frac{1}{\t_1 \rw p_2}} \in Q_{{\widetilde{V}}^{\frac{1}{p_1\rw p_2}}}.
	\end{equation*}
	%	$$
	%	\vp_4(x):=\bigg(\int_0^{\infty}{\mathcal V}(t,x)^{p_1 \rw\t_2} d\bigg(-\|\o_2\|_{\t_2,(t,\infty)}^{p_1 \rw \t_2} \bigg)\bigg)^{\frac{1}{p_1 \rw \t_2}}
	%	$$
	%	$$
	%	\vp_3(x):=\sup_{t\in(0,\infty)} {\mathcal V}(t,x) \|\o_2\|_{\t_2,(t,\infty)}
	%	$$
	%	$$
	%	\vp_1(x):= \sup_{t\in (0,\infty)} \widetilde{V}(t) \, {\mathcal V}(x,t) \,\, \|\o_1\|_{\t_1,(0,t)}^{-1}
	%	$$
	%	
	
	{\rm (i)} If $\max\{p_1,\t_1\}\leq \t_2$, then
	\begin{align*}
	\|\Id\|_{\dual LM_{p_1\t_1,\o_1}(\rn,v_1)\rw LM_{p_2\t_2,\o_2}(\rn,v_2)} \ap & \,\, \sup_{x\in (0,\infty)} \vp_2(x) \sup_{t\in (0,\infty)} {\mathcal V}(t,x) \|\o_2\|_{\t_2,(t,\infty)} \\
	& + \|\o_1\|_{\t_1,(0,\infty)}^{-1} \sup_{t\in (0,\infty)} \widetilde{V}(t) \|\o_2\|_{\t_2,(t,\infty)};
	\end{align*}
	
	{\rm (ii)} If $p_1 \leq \t_2 < \t_1$, then
	\begin{align*}
	\|\Id\|_{\dual LM_{p_1\t_1,\o_1}(\rn,v_1)\rw LM_{p_2\t_2,\o_2}(\rn,v_2)} & \\
	& \hspace{-3.5cm} \ap \bigg( \int_0^{\infty} \vp_2(x)^{\frac{\t_1 \rw \t_2 \cdot \t_1 \rw p_2}{\t_2 \rw p_2}} \widetilde{V}(x)^{\t_1 \rw p_2} \bigg(\sup_{t\in(0,\infty)} {\mathcal V}(t,x) \|\o_2\|_{\t_2,(t,\infty)}\bigg)^{\t_1 \rw \t_2} d \bigg( - \|\o_1\|_{\t_1,(0,x)}^{-\t_1 \rw p_2}\bigg) \bigg)^{\frac{1}{\t_1 \rw \t_2}} \\
	& \hspace{-3cm} + \|\o_1\|_{\t_1,(0,\infty)}^{-1} \sup_{t\in (0,\infty)} \widetilde{V}(t) \|\o_2\|_{\t_2,(t,\infty)};
	\end{align*}
	
	{\rm (iii)} If $\t_1 \leq \t_2 < p_1$, then
	\begin{align*}
	\|\Id\|_{\dual LM_{p_1\t_1,\o_1}(\rn,v_1) \rw LM_{p_2\t_2,\o_2}(\rn,v_2)} \ap & \,\, \sup_{x\in (0,\infty)} \vp_2(x) \bigg(\int_0^{\infty} {\mathcal V}(t,x)^{p_1 \rw \t_2} d\bigg(-\|\o_2\|_{\t_2,(t,\infty)}^{p_1 \rw \t_2}\bigg)\bigg)^{\frac{1}{p_1 \rw \t_2}}\\
	&  + \|\o_1\|_{\t_1,(0,\infty)}^{-1} \bigg( \int_0^{\infty} \widetilde{V}(t)^{p_1 \rw \t_2} d\bigg(-\|\o_2\|_{\t_2,(t,\infty)}^{p_1 \rw \t_2}\bigg)\bigg)^{\frac{1}{p_1 \rw \t_2}};
	\end{align*}
	
	{\rm (iv)} If $\t_2 < \min\{p_1,\t_1\}$, then
	\begin{align*}
	\|\Id\|_{\dual LM_{p_1\t_1,\o_1}(\rn,v_1)\rw LM_{p_2\t_2,\o_2}(\rn,v_2)} & \\
	& \hspace{-3.5cm} \ap \bigg( \int_0^{\infty} \vp_2(x)^{\frac{\t_1 \rw \t_2 \cdot \t_1 \rw p_2}{\t_2 \rw p_2}} \widetilde{V}(x)^{\t_1 \rw p_2} \bigg(\int_0^{\infty}{\mathcal V}(t,x)^{p_1 \rw\t_2} d\bigg(-\|\o_2\|_{\t_2,(t,\infty)}^{p_1 \rw \t_2} \bigg)\bigg)^{\frac{\t_1 \rw \t_2}{p_1 \rw \t_2}} d\bigg(-\|\o_1\|_{\t_1,(0,x)}^{-\t_1 \rw p_2} \bigg)\bigg)^{\frac{1}{\t_1 \rw \t_2}} \\
	& \hspace{-3cm} +\|\o_1\|_{\t_1,(0,\infty)}^{-1} \bigg( \int_0^{\infty} \widetilde{V}(t)^{p_1 \rw \t_2} d\bigg(-\|\o_2\|_{\t_2,(t,\infty)}^{p_1 \rw \t_2} \bigg)\bigg)^{\frac{1}{p_1 \rw \t_2}}.
	\end{align*}
\end{thm}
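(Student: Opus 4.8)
The plan is to follow the duality-and-reduction scheme announced in the introduction. By the change of variables recorded after \eqref{mainemb2} it suffices to treat \eqref{mainemb1}, i.e.\ the embedding $\dual LM_{p_1\t_1,\o_1}(\rn,v_1)\hra LM_{p_2\t_2,\o_2}(\rn,v_2)$, whose norm I write as the supremum of $\|f\|_{LM_{p_2\t_2,\o_2}(\rn,v_2)}/\|f\|_{\dual LM_{p_1\t_1,\o_1}(\rn,v_1)}$ over $0\le f$. The crucial structural feature is $p_2<\t_2$ (which holds here since $p_2<\min\{\t_1,\t_2\}$); this is exactly the regime in which the duality argument applies, and it is why the complementary case $p_2>\t_2$ must be left open. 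The first task is therefore a \emph{reduction step}: to show that this two-parameter embedding constant is equivalent, with constants independent of the data, to the best constant $c$ in an iterated Hardy-type inequality of the form \eqref{eq.4.11}, $\big\|\|H^*\phi\|_{p,u,(0,\cdot)}\big\|_{q,w,\I}\le c\,\|\phi\|_{\theta,v,\I}$, posed on $\I$.

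The mechanism of the reduction is to separate the spatial variable on $\rn$ from the radial variable on $\I$. For fixed $f$ the inner norms $t\mapsto\|f\|_{p_2,v_2,\Br}$ and $t\mapsto\|f\|_{p_1,v_1,\Brd}$ are respectively nondecreasing and nonincreasing, and the only way the weights $v_1,v_2$ and the spatial exponents $p_1,p_2$ enter the final estimate is through the single quantity $\widetilde V(x)=\|v_1^{-1}v_2\|_{p_1\rw p_2,\Bx}$. To produce $\widetilde V$ one dualizes the target on its $L_{\t_2/p_2}$-factor (legitimate precisely because $p_2<\t_2$) and applies Fubini to the ball integral $\int_{\Br}(fv_2)^{p_2}$: this produces the operator $H^*$ acting on the dual variable and, after estimating the spatial integral by H\"older's inequality with the exponent $p_1/p_2>1$, replaces the weighted Lebesgue norms over balls and complements by $\widetilde V$ at the two relevant scales — this is where the optimal-constant estimates of the embeddings \eqref{emb1}--\eqref{emb4} (the content of Theorems \ref{main01}--\ref{main03}, drawn from \cite{mu_emb}) serve as the inner building block. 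In the resulting one-dimensional inequality $\widetilde V$ encodes all spatial information, the kernel $\mathcal V(t,x)=\widetilde V(t)/(\widetilde V(t)+\widetilde V(x))$ governs the transition between the complementary (source) scale $x$ and the ball (target) scale $t$, and the power $\widetilde V^{1/(p_1\rw p_2)}$ serves as the admissible function $U$ of the fundamental-function calculus; the standing requirement $\vp_2\in Q_{\widetilde V^{1/(p_1\rw p_2)}}$ is exactly the non-degeneracy needed for that calculus to apply.

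Once \eqref{eq.4.11} is in force, I would invoke the sharp two-sided characterizations of the iterated Hardy inequality from \cite{gmp} and \cite{gop}. These split into branches according to the mutual ordering of the exponents, and the four regimes (i) $\max\{p_1,\t_1\}\le\t_2$, (ii) $p_1\le\t_2<\t_1$, (iii) $\t_1\le\t_2<p_1$, (iv) $\t_2<\min\{p_1,\t_1\}$ correspond exactly to these branches. In each the fundamental function is $\vp_2$, and whether a given outer comparison is ``$\le$'' or ``$>$'' dictates whether the corresponding factor in the displayed estimate is a supremum or an integral against $d(-\|\o_1\|_{\t_1,(0,\cdot)}^{-\t_1\rw p_2})$ or $d(-\|\o_2\|_{\t_2,(\cdot,\i)}^{p_1\rw\t_2})$; matching the output of \cite{gmp,gop} line by line with the four displayed formulas then completes the proof. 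The separate additive term $\|\o_1\|_{\t_1,(0,\i)}^{-1}(\cdots)$ present in every case is the contribution of the global endpoint $x\to\i$ of the source norm.

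The step I expect to be the main obstacle is the reduction itself, and within it the lower bound: showing that the best constant in \eqref{eq.4.11} is controlled \emph{by} the embedding constant, not merely conversely. This direction forces one to build near-extremal functions $f$ on $\rn$ from near-extremal $\phi$ on $\I$, chosen so as to be simultaneously almost optimal on the balls $\Br$ and on all complements $\Brd$ across every scale; the radial profiles achieving this must be calibrated against $\widetilde V$, and verifying that the discretization does not lose the non-degeneracy hypothesis on $\vp_2$ is the delicate point. The remaining case analysis is, by contrast, bookkeeping once the correct weights $u,w$ and exponents in \eqref{eq.4.11} have been identified.
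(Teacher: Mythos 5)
Your plan coincides with the paper's proof: the reduction is exactly Lemma~\ref{mainlemma} (duality of the $L_{\t_2/p_2}$-factor, legitimate since $p_2<\t_2$, plus Fubini producing $H^*$), the inner embedding constant is then read off from \cite[Theorem~4.2, (c)]{mu_emb}, polar coordinates compress the spatial data into $\widetilde V$ via \eqref{Vtilde}, the additive $\|\o_1\|_{\t_1,(0,\infty)}^{-1}(\cdots)$ term is the non-iterated Hardy piece $C_1$, and the four regimes are settled by \cite[Theorem~3.1, (i)--(iv)]{gmp}. The one point where your assessment differs from the actual argument is the ``main obstacle'' you anticipate: no near-extremal functions on $\rn$ need to be constructed, because the duality step is an exact identity (interchange of suprema) and the two-sidedness of every subsequent estimate is inherited from the cited characterizations, so the lower bound comes for free.
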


\begin{thm}\label{maintheorem2}
	Let $0 < \t_1 < p = p_1 = p_2 < \t_2 < \infty$. Assume that $v_1, v_2 \in \W(\rn)\cap C(\rn)$, $\o_1 \in \dual{\O_{\t_1}}$ and $\o_2\in \O_{\t_2}$.
	\begin{equation*}
	\|\Id\|_{\dual LM_{p_1\t_1,\o_1}(\rn,v_1)\rw LM_{p_2\t_2,\o_2}(\rn,v_2)}\ap 
	\sup_{t\in \I} \big\|\|\o_1\|_{\t_1,(0,|\cdot|)}^{-1} \big\|_{\infty,v_1^{-1}v_2,B(0,t)} \|\o_2\|_{\t_2,(t,\infty)}.
	\end{equation*}
\end{thm}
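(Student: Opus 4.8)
The plan is to prove matching two-sided bounds for the operator norm, both controlled by
\begin{equation*}
S:=\esup_{x\in\rn}\frac{v_2(x)}{v_1(x)}\,\|\o_1\|_{\t_1,(0,|x|)}^{-1}\,\|\o_2\|_{\t_2,(|x|,\infty)},
\end{equation*}
and afterwards to identify $S$ with the right-hand side of the theorem. Since the Morrey quasi-norms depend only on $|f|$, I may assume $f\ge0$ and put $g:=f^{p}$. With $G(r):=\int_{\Br}g\,v_2^{p}$ one has the algebraic identities $\|f\|_{\dual LM_{p\t_1,\o_1}(\rn,v_1)}^{p}=\|g\|_{\dual LM_{1,\,\t_1/p,\,\o_1^{p}}(\rn,v_1^{p})}$ and $\|f\|_{LM_{p\t_2,\o_2}(\rn,v_2)}^{p}=\|G\|_{L_{\t_2/p}(\o_2^{\t_2}dt)}$. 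The hypothesis $p<\t_2$ makes $\t_2/p>1$, so the latter is a genuine Banach-space norm and duality applies: exchanging the order of integration (Fubini) and writing $\Phi_\psi:=H^{*}(\psi\,\o_2^{\t_2})$, with $H^{*}$ as in \eqref{eq.4.11},
\begin{equation*}
\|f\|_{LM_{p\t_2,\o_2}(\rn,v_2)}^{p}=\sup_{\psi}\int_{\rn}g(x)\,v_2(x)^{p}\,\Phi_\psi(|x|)\,dx,
\end{equation*}
the supremum running over all $\psi\ge0$ with $\int_0^\infty\psi^{(\t_2/p)'}\o_2^{\t_2}\,dt\le1$.

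For each fixed $\psi$ the inner supremum over $g$ is precisely the optimal constant of the embedding $\dual LM_{1,\,\t_1/p,\,\o_1^{p}}(\rn,v_1^{p})\hra L_1\big(v_2^{p}\Phi_\psi(|\cdot|)\big)$. Representing the $L_1$-target as $LM_{11,\widetilde\o}(\rn,v_2^{p})$ with $\|\widetilde\o\|_{1,(t,\infty)}=\Phi_\psi(t)$ by Lemma~\ref{LMpp}(i), this is an embedding of the form covered by Theorem~\ref{main03}, with coinciding inner indices and $\t_1/p<1$ because $\t_1<p$; hence one is in case~(i). Since $1\rw1=\infty$, Theorem~\ref{main03}(i) evaluates the constant as $\ap\sup_{t>0}\|\o_1\|_{\t_1,(0,t)}^{-p}\,\esup_{|x|<t}(v_2/v_1)(x)^{p}\,\Phi_\psi(|x|)$. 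Now I would bound this pointwise: for $|x|<t$ the monotonicity of $\|\o_1\|_{\t_1,(0,\cdot)}$ gives $\|\o_1\|_{\t_1,(0,t)}^{-1}\le\|\o_1\|_{\t_1,(0,|x|)}^{-1}$, while Hölder's inequality yields $\Phi_\psi(|x|)\le\|\o_2\|_{\t_2,(|x|,\infty)}^{p}$ uniformly in $\psi$. Multiplying by $\|f\|_{\dual LM_{p\t_1,\o_1}(\rn,v_1)}^{p}$, taking the supremum over $\psi$ and $t$, and extracting $p$-th roots yields $\|\Id\|\le S$, with no further recourse to the iterated Hardy inequalities.

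For the reverse bound I would use test functions concentrated near a point $x_0$. Here the continuity of $v_1,v_2$ is essential: on a small ball $B(x_0,\d)$ the weights are nearly constant, so for $f$ supported there $\int_{\Brd}f^{p}v_1^{p}$ is comparable to the total mass of $f^{p}v_1^{p}$ for $r<|x_0|$ and negligible for $r>|x_0|$, and symmetrically $\int_{\Br}f^{p}v_2^{p}$ behaves oppositely. This produces a ratio comparable to $\tfrac{v_2(x_0)}{v_1(x_0)}\|\o_1\|_{\t_1,(0,|x_0|)}^{-1}\|\o_2\|_{\t_2,(|x_0|,\infty)}$, and a supremum over $x_0$ gives $\|\Id\|\gs S$. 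It remains to identify $S$ with the asserted expression: since both $t\mapsto\|\o_1\|_{\t_1,(0,t)}^{-1}$ and $t\mapsto\|\o_2\|_{\t_2,(t,\infty)}$ are non-increasing, the elementary equivalence $\esup_{x}a(|x|)\,b(|x|)\ap\sup_{t>0}b(t)\,\esup_{|x|<t}a(|x|)$, valid for non-increasing $b$, converts $S$ into $\sup_{t\in\I}\big\|\|\o_1\|_{\t_1,(0,|\cdot|)}^{-1}\big\|_{\infty,v_1^{-1}v_2,B(0,t)}\,\|\o_2\|_{\t_2,(t,\infty)}$.

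The step I expect to be the crux is the reduction in the second paragraph: verifying that the inner supremum is legitimately the embedding constant furnished by Theorem~\ref{main03}, that the admissibility hypotheses ($\o_1^{p}\in\dual{\O_{\t_1/p}}$ and $\widetilde\o\in\O_1$) transfer correctly, and above all that one lands in case~(i), where the constant is a pure supremum. This is exactly where the standing assumptions interact: $p<\t_2$ is what licenses the duality step, while $p_1=p_2=p$ collapses the inner embedding to an $\esup$ (the index $1\rw1=\infty$), so that $\t_1<p$ places us in the supremum regime of Theorem~\ref{main03} and the whole argument avoids the integral, genuinely iterated-Hardy expressions that arise when $p_2<p_1$ in Theorems~\ref{maintheorem1} and~\ref{maintheorem3}. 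A secondary technical point is making the concentration argument in the lower bound rigorous, for which the continuity of $v_1,v_2$ is used.
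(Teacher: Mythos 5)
Your argument is correct, and its skeleton coincides with the paper's: you dualize the outer $\t_2/p$-norm exactly as in Lemma~\ref{mainlemma}, and you evaluate the resulting inner supremum --- the constant of $\dual LM_{p\t_1,\o_1}(\rn,v_1)\hra L_p\big(v_2(\cdot)\Phi_\psi(|\cdot|)^{1/p}\big)$ --- by what is in substance the same characterization the paper invokes: your route through Lemma~\ref{LMpp} and Theorem~\ref{main03}(i) is \cite[Theorem~4.2, (b)]{mu_emb} in disguise, and your index bookkeeping ($1\rw 1=\infty$, $\t_1/p\le 1$ forcing the supremum regime) is right. Where you diverge is the endgame. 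The paper collapses the two suprema via \eqref{esupFG1} and then applies the \emph{two-sided} characterization of the boundedness of $H^*$ from $L_{(\t_2/p)'}(\o_2^{-p})$ into a weighted $L_\infty$ space, which delivers both inequalities at once; you instead prove the upper bound by the one-sided H\"older estimate $\Phi_\psi(|x|)\le\|\o_2\|^p_{\t_2,(|x|,\infty)}$ (the easy half of that characterization) together with the monotonicity of $\|\o_1\|_{\t_1,(0,\cdot)}$, and you supply the lower bound separately with test functions concentrated near a point. Both halves work: the concentration argument goes through because $t\mapsto\|\o_1\|_{\t_1,(0,t)}$ and $t\mapsto\|\o_2\|_{\t_2,(t,\infty)}$ are continuous (absolute continuity of finite integrals) and the continuity of $v_1,v_2$ turns your essential supremum $S$ into a genuine pointwise supremum; your final identification of $S$ with the stated right-hand side via \eqref{esupFG1} is exactly the manipulation in the last display of the paper's proof. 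What the paper's route buys is economy (no test functions, no positivity issues for the dual weight); what yours buys is that the sharp $H^*$ characterization is never needed. One point you flag but should actually settle: for $\psi$ of bounded support $\widetilde\o=\psi\,\o_2^{\t_2}$ fails to lie in $\O_1$, so Theorem~\ref{main03} does not formally apply; since you only need the ``$\lesssim$'' half of its conclusion, this is harmless --- either note that the upper estimate in \cite{mu_emb} holds without non-degeneracy of the target weight, or restrict to strictly positive $\psi$ by monotone approximation.
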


\begin{thm}\label{maintheorem4}
	Let $0 < \t_1, \,\t_2 < \infty$ and $0 < p = p_1 = p_2 < \min\{\t_1,\t_2\}$. Assume that $v_1, v_2\in \W(\rn)$ such that $v_1^{-1}v_2 \in C(\rn)$. Suppose that $\o_1 \in \dual{\O_{\t_1}}$, $\o_2\in \O_{\t_2}$ and 
	$$
	0 < \|\o_2^{-1}\|_{\t_2 \rw p, (x,\infty)} < \infty
	$$
	holds for all $x>0$.
	
	{\rm (i)} If $\t_1 \le \t_2$, then
	\begin{align*}
	\|\Id\|_{\dual LM_{p_1\t_1,\o_1}(\rn,v_1)\rw LM_{p_2\t_2,\o_2}(\rn,v_2)} \\ 
	& \hspace{-4cm} \ap \sup_{x\in (0,\infty)} \bigg( \widetilde{V}(x)^{\t_1\rw p} \int_x^{\infty} \, d \bigg(-\|\o_1\|_{\t_1,(0,t)}^{-\t_1\rw p}\bigg)  + \int_0^x  \widetilde{V}(t)^{\t_1\rw p} \, d\bigg(-\|\o_1\|_{\t_1,(0,t)}^{- \t_1\rw p}\bigg) \bigg) ^{{\frac{1}{\t_1\rw p}}}   \|\o_2\|_{\t_2,(x,\infty)} \\
	&\hspace{-3.5cm} +\|\o_1\|_{\t_1,(0,\infty)}^{-1} \sup_{t \in (0,{\infty})} \widetilde{V}(t) \|\o_2\|_{\t_2,(t,\infty)};
	\end{align*}
	
	{\rm (ii)} If $\t_2 < \t_1$, then
	\begin{align*}
	\|\Id\|_{\dual LM_{p_1\t_1,\o_1}(\rn,v_1)\rw LM_{p_2\t_2,\o_2}(\rn,v_2)} & \\
	& \hspace{-4.5cm}\ap \bigg(\int_0^{\infty} \bigg(\int_x^{\infty} d\bigg(- \|\o_1\|_{\t_1,(0,t)}^{-\t_1\rw p}\bigg)\bigg)^{\frac{\t_1\rw \t_2}{\t_2 \rw p}}  \bigg(\sup_{0 < \tau \leq x} \widetilde V(\tau) \|\o_2\|_{\t_2,(\tau,\infty)}\bigg)^{\t_1 \rw \t_2} d\bigg(-\|\o_1\|_{\t_1,(0,x)}^{-\t_1 \rw p}\bigg)\bigg)^{\frac{1}{\t_1 \rw \t_2}} \\
	& \hspace{-4cm} + \bigg(\int_0^{\infty} \bigg(\int_0^x \widetilde V(t)^{\t_1\rw p} d\bigg(-\|\o_1\|_{\t_1,(0,t)}^{-\t_1\rw p} \bigg) \bigg)^{\frac{\t_1\rw \t_2}{\t_2\rw p}} \widetilde V(x)^{\t_1\rw p} \|\o_2\|_{\t_2,(t,\infty)}^{\t_1\rw \t_2} d\bigg(-\|\o_1\|_{\t_1,(0,x)}^{-\t_1\rw p} \bigg)\bigg)^{\frac{1}{\t_1\rw \t_2}}\\
	& \hspace{-4cm} + \|\o_1\|_{\t_1,(0,\infty)}^{-1} \sup_{t\in (0,{\infty})} \widetilde V(t) \|\o_2\|_{\t_2,(t,\infty)}.
	\end{align*}
\end{thm}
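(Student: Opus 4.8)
The plan is to use the coincidence $p_1=p_2=p$ to linearise the two Morrey functionals, reduce the embedding constant to a one–dimensional weighted inequality, and then solve the latter with the iterated Hardy inequality results of \cite{gmp} and \cite{gop}. Writing $H:=|f|^p$ gives a bijection between admissible $f$ and nonnegative measurable $H$, under which the $\t_i/p$–powers turn the inner $L_p$–norms into linear expressions and
\begin{equation*}
\|\Id\|_{\dual LM_{p\t_1,\o_1}(\rn,v_1)\rw LM_{p\t_2,\o_2}(\rn,v_2)}
=\sup_{H\ge 0}\frac{\big(\int_0^\infty\big(\int_{B(0,r)}H(x)v_2(x)^p\,dx\big)^{\t_2/p}\o_2(r)^{\t_2}\,dr\big)^{1/\t_2}}{\big(\int_0^\infty\big(\int_{\dual B(0,r)}H(x)v_1(x)^p\,dx\big)^{\t_1/p}\o_1(r)^{\t_1}\,dr\big)^{1/\t_1}}.
\end{equation*}
Since $p_1\rw p_2=\infty$ here, $\widetilde V(x)=\esup_{y\in B(0,x)}v_2(y)/v_1(y)$ is a running essential supremum, and the hypothesis $v_1^{-1}v_2\in C(\rn)$ makes it a continuous nondecreasing majorant of $v_2/v_1$.

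First I would pass to a genuinely radial problem. Pushing the measure $H(x)v_1(x)^p\,dx$ forward under $x\mapsto|x|$ yields a nonnegative measure $\nu$ on $\I$ with $\int_{\dual B(0,r)}Hv_1^p=\nu([r,\infty))$, while $v_2(x)/v_1(x)\le\widetilde V(|x|)$ gives $\int_{B(0,r)}Hv_2^p\le\int_0^r\widetilde V(s)^p\,d\nu(s)$. Writing $d\nu=h\,ds$, this bounds the embedding constant by the best constant $C'$ in the reduced inequality
\begin{equation*}
\Big(\int_0^\infty\Big(\int_0^r\widetilde V(s)^p h(s)\,ds\Big)^{\t_2/p}\o_2(r)^{\t_2}\,dr\Big)^{1/\t_2}
\le C'\,\Big(\int_0^\infty\Big(\int_r^\infty h(s)\,ds\Big)^{\t_1/p}\o_1(r)^{\t_1}\,dr\Big)^{1/\t_1},\qquad h\ge 0,
\end{equation*}
and the reverse bound is obtained by testing with $H$ concentrated near spheres on which $v_2/v_1$ almost attains $\widetilde V$; this is the step where continuity of $v_1^{-1}v_2$ is indispensable.

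The reduced inequality is a weighted Hardy inequality with the forward Hardy operator weighted by $\widetilde V^p$ on the left and the co-Hardy operator $h\mapsto H^*h=\int_\cdot^\infty h$ on the right. Interpreting it as boundedness of the forward operator on the space normed by $\|H^*h\|$ and dualising the left $L^{\t_2/p}$–norm (legitimate because $p<\t_2$) recasts it as an iterated Hardy inequality of the form \eqref{eq.4.11}, whose solution is given in \cite{gmp} and \cite{gop}. The two outer exponents are $\t_2/p$ and $\t_1/p$, so the dichotomy $\t_1/p\le\t_2/p$ versus $\t_2/p<\t_1/p$ of those theorems is exactly the split $\t_1\le\t_2$ versus $\t_2<\t_1$: in case (i) the answer is of supremum type and in case (ii) of integral (Maz'ya–Rozin) type, which produces the exponents $\t_1\rw p$, $\t_1\rw\t_2$ and $\t_2\rw p$ displayed. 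The additive term $\|\o_1\|_{\t_1,(0,\infty)}^{-1}\sup_t\widetilde V(t)\|\o_2\|_{\t_2,(t,\infty)}$ is the second, boundary-at-infinity, term intrinsic to the characterisation of $H^*$, and the standing condition $0<\|\o_2^{-1}\|_{\t_2\rw p,(x,\infty)}<\infty$ is precisely the non-degeneracy hypothesis under which that characterisation applies. As a check, the inner factor in (i) is equivalent to the function $\vp_2$ of Theorem~\ref{maintheorem3} evaluated at $p_2=p$, reflecting the continuity of the formulas as $p_2\uparrow p_1$.

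The hard part will be the sharpness of the radial reduction, that is, the lower bound. Unlike in Theorems~\ref{maintheorem1} and \ref{maintheorem3}, no admissibility or quasiconcavity of $\widetilde V$ is assumed, only continuity of $v_1^{-1}v_2$; the delicate issue is to construct test functions $H$ on $\rn$ whose radial distribution reproduces the extremiser of the reduced inequality while simultaneously forcing $\int_{B(0,r)}Hv_2^p\approx\int_0^r\widetilde V^p\,d\nu$. Because $\widetilde V$ is a running supremum, the near-maximising points of $v_2/v_1$ in $B(0,s)$ need not lie near $|x|=s$, and continuity is exactly what permits placing mass at points with ratio arbitrarily close to $\widetilde V(s)$ without disturbing the radial profile by more than a controlled constant. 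Once this equivalence is established, translating the abstract constants of \cite{gmp,gop} into the displayed expressions is routine bookkeeping.
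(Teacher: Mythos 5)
Your proposal is correct in outline and reaches the same one--dimensional inequalities as the paper, but by a genuinely different route. The paper dualizes first: since $p<\t_2$, Lemma~\ref{mainlemma} converts the problem into $\sup_g \|\Id\|_{\dual LM_{p\t_1,\o_1}(\rn,v_1)\rw L_{p}(v_2(\cdot)H^*g(|\cdot|)^{1/p})}\big/\|g\|_{\frac{\t_2}{\t_2-p},\o_2^{-p},(0,\infty)}^{1/p}$, then imports the known characterization of $\dual LM_{p\t_1,\o_1}(\rn,v_1)\hra L_p(w)$ from \cite[Theorem~4.2~(d)]{mu_emb}, and only afterwards radializes via \eqref{polcorforsup}: the inner norm becomes $\|H^*g\|_{\infty,\tilde{\tilde v},(0,t)}$ with the \emph{spherical} supremum $\tilde{\tilde v}(\tau)=\big(\sup_{|y|=\tau}v_1^{-1}(y)v_2(y)\big)^p$, and the passage to the running supremum $\widetilde V^p$ happens inside the application of \cite[Theorems~4.1 and~4.4]{gop}, exploiting via \eqref{esupFG1} that $H^*g$ is non-increasing. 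You instead radialize on the primal side, replacing $v_2/v_1$ by $\widetilde V(|\cdot|)$ directly and justifying sharpness by concentrating mass at near-maximizing points; this absorbs into one test-function construction what the paper splits between \eqref{polcorforsup} and \eqref{esupFG1}. Your construction is in fact easier than you fear: if the maximizer $y_s$ of $v_1^{-1}v_2$ over $\overline{B(0,s)}$ (attained by continuity) satisfies $|y_s|<s$, placing the mass there only \emph{decreases} $\int_{\dual B(0,r)}Hv_1^p$ for $r\in(|y_s|,s]$ and only \emph{adds} to $\int_{B(0,r)}Hv_2^p$ for those $r$, so the radial profile is not ``disturbed'' at all --- both changes push the ratio in the favourable direction, and a discretization of $h$ into finitely many masses smeared over small balls makes this rigorous. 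Two points to tighten: after your reduction you still must compute, for fixed $g$, the association norm $\sup_h\int_0^\infty \widetilde V^p (H^*g)\,h \big/ \big(\int_0^\infty (H^*h)^{\t_1/p}\o_1^{\t_1}\big)^{p/\t_1}$ --- this is exactly the two-term expression the paper quotes from \cite{mu_emb}, not something you can skip as bookkeeping --- and the resulting iterated inequality has an $L_\infty$ (supremum) inner norm, so it is the supremum-operator variant treated in \cite{gop}, not literally \eqref{eq.4.11}/\cite{gmp}. What each approach buys: the paper's is shorter because it reuses \cite{mu_emb} wholesale; yours is more self-contained and makes visible where the extremizers live and why only continuity of $v_1^{-1}v_2$ (and no quasiconcavity of $\widetilde V$) is needed here.
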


%%%%%%%%%%%%%%%%%%%%%%%%%%%%%%%%%%%%%%%%%%%%%%%%%%%%%%%%%%%%%%%%%%%%%%%%%%%%%%%%%%%%%%%%%%%%%%%%%%%%%%%%%%%%%%%%%%%%%%%%%%%%%%%%%%%%%%%%%%%%%%%%%%%%%%%%%%%%%%%%%%%%%%%%%%%%%%%%%%%%%%%%%%%%%%%%%%%%%%%%%%%%%%%%%%%%%%%%%%%%%%%%%%%%%%%%%%%%%%%%%%%%%%%%%%%%%%%%%%%%%%%%%%%%%%%%%%%%%%%%%%%%

\section{Proofs of main results}\label{s7}

%In this section we prove our main results.

Before proceeding to the proof of our main results we recall the following integration in polar coordinates formula. 

We denote the unit sphere $\{x\in \rn : |x|=1\}$ in $\rn$ by $S^{n-1}$. If $x\in \rn \backslash \{0\}$, the polar coordinates of $x$ are
$$
r=|x| \in (0,\infty), \qquad x'=\frac{x}{|x|} \in S^{n-1}.
$$
There is a unique Borel measure $\sigma = \sigma_{n-1}$ on $S^{n-1}$ such that if $f$ is Borel measurable on $\rn$ and $f\geq 0$ or $f\in L^1(\rn)$, then 
$$
\int_{\rn} f(x) \,dx = \int_0^{\infty} \int_{S^{n-1}} f(rx') r^{n-1} d\sigma(x') dr
$$
(see, for instance, \cite[p. 78]{Folland}).

\begin{lem}\label{triviality}
	Let $0 < p_1, \,p_2, \,\t_1, \,\t_2 \leq \infty$ and $p_1 < p_2$. Assume that $v_1, v_2\in \W(\rn)$, $\o_1\in \dual{\O_{\t_1}}$ and $\o_2\in \O_{\t_2}$. Then $\dual{LM_{p_1\t_1,\o_1}(\rn,v_1)}\not \hookrightarrow LM_{p_2\t_2,\o_2}(\rn,v_2)$.
\end{lem}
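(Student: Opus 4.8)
The plan is to refute the embedding by producing a single function $f$ lying in $\dual LM_{p_1\t_1,\o_1}(\rn,v_1)$ but not in $LM_{p_2\t_2,\o_2}(\rn,v_2)$; since $X\hra Y$ requires at least the set-theoretic inclusion $X\subseteq Y$, exhibiting one such $f$ suffices. The driving mechanism is the strict inclusion of local Lebesgue classes available precisely when $p_1<p_2$: I will build a function with a power-type local singularity that is locally $p_1$-integrable (against $v_1$) but not locally $p_2$-integrable (against $v_2$), and place it away from the origin so that the complementary structure keeps the domain norm finite while the singularity destroys membership in the target.

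First I would localize the weights, which is necessary because $v_1,v_2$ are arbitrary elements of $\W(\rn)$ and need not be bounded or bounded away from $0$. Since $v_1<\i$ and $v_2>0$ a.e., the set $G:=\{x\in\rn:\ v_1(x)<\i,\ v_2(x)>0\}$ has full measure, so $G$ meets the annulus $A:=\{1<|x|<2\}$ in full measure. Writing $G=\bigcup_{k,m\in\N}\{v_1\le k,\ v_2\ge 1/m\}$ and using countable additivity, I obtain $k,m$ for which the set $E:=\{x\in A:\ v_1(x)\le k,\ v_2(x)\ge 1/m\}$ has positive measure. By the Lebesgue density theorem I may then fix a density point $x_0\in E$, which in particular has $x_0\ne 0$.

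Next I would set $f(x):=|x-x_0|^{-\gamma}\,\chi_{E\cap B(x_0,\delta)}(x)$, where $\delta>0$ is small enough that $\overline{B(x_0,\delta)}\subset A$ avoids the origin, and where $\gamma$ is taken in the range $n/p_2\le\gamma<n/p_1$ (nonempty because $p_1<p_2$), respectively any $0<\gamma<n/p_1$ when $p_2=\i$. Applying the recalled polar coordinates formula after a translation centred at $x_0$, together with $v_1\le k$ on $E$ and $\gamma p_1<n$, gives $\|f\|_{p_1,v_1,\rn}<\i$, whence $f\in L_{p_1,v_1}(\dual\Br)$ for every $r>0$. Since $\supp f$ is compact and contained in $\{|x|>|x_0|-\delta\}$, the map $r\mapsto\|f\|_{p_1,v_1,\dual\Br}$ is bounded by $\|f\|_{p_1,v_1,\rn}$ and vanishes for $r\ge|x_0|+\delta$, so the hypothesis $\o_1\in\dual{\O_{\t_1}}$ yields $\|f\|_{\dual LM_{p_1\t_1,\o_1}(\rn,v_1)}\le\|f\|_{p_1,v_1,\rn}\,\|\o_1\|_{\t_1,(0,|x_0|+\delta)}<\i$. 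In the opposite direction, because $x_0$ is a density point of $E$ and $\gamma p_2\ge n$ (and $\gamma>0$ suffices when $p_2=\i$), the same polar-coordinate computation combined with $v_2\ge 1/m$ on $E$ forces $\|f\|_{p_2,v_2,B(x_0,\delta)}=\i$; hence $f\notin L_{p_2,v_2}^{\loc}(\rn)$ and a fortiori $f\notin LM_{p_2\t_2,\o_2}(\rn,v_2)$. Equivalently, $\|f\|_{p_2,v_2,\Br}=\i$ for every $r>|x_0|+\delta$, and $\o_2\in\O_{\t_2}$ guarantees $\|\o_2\|_{\t_2,(|x_0|+\delta,\i)}>0$, so the outer $\t_2$-quasinorm is infinite.

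The two singular-integral estimates are routine once polar coordinates are in force, so I expect no difficulty there. The genuine obstacle is the arbitrariness of the weights: they prevent placing the singularity at a freely chosen point, and the measure-theoretic extraction of $E$ followed by the choice of a density point $x_0\in E$ is exactly what overcomes this, simultaneously bounding $v_1$ from above and $v_2$ from below on $\supp f$ while ensuring that the $p_2$-integral genuinely diverges. This argument is uniform over the whole range $0<p_1,p_2,\t_1,\t_2\le\i$ with $p_1<p_2$; the endpoint cases $p_2=\i$, $\t_1=\i$ or $\t_2=\i$ require only replacing the relevant integral by an essential supremum, which does not affect any step.
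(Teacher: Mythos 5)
Your argument is correct: the function $f(x)=|x-x_0|^{-\gamma}\chi_{E\cap B(x_0,\delta)}(x)$ with $n/p_2\le\gamma<n/p_1$, centred at a density point $x_0$ of a set $E$ on which $v_1$ is bounded above and $v_2$ below, does lie in $\dual LM_{p_1\t_1,\o_1}(\rn,v_1)$ (its support is a compact set away from the origin, so the outer quasi-norm is controlled by $\|f\|_{p_1,v_1,\rn}\,\|\o_1\|_{\t_1,(0,|x_0|+\delta)}<\i$) and fails to lie in $L_{p_2,v_2}^{\loc}(\rn)$, hence in $LM_{p_2\t_2,\o_2}(\rn,v_2)$, using $\|\o_2\|_{\t_2,(|x_0|+\delta,\i)}>0$. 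The route, however, is genuinely different from the paper's. The paper argues by contradiction and pure reduction: testing the assumed embedding on functions supported in $B(0,\tau)$ and using the trivial estimates \eqref{1}--\eqref{2}, it deduces that $L_{p_1}(B(0,\tau),v_1)\hra L_{p_2}(B(0,\tau),v_2)$ would have to hold, and then invokes (without proof) the standard fact that such a weighted Lebesgue embedding into a strictly larger exponent over a ball is impossible. Your proof skips the reduction and instead constructs a single explicit witness, which amounts to unfolding that standard fact: the density-point localization of the weights and the power singularity are exactly the ingredients one would use to refute the local Lebesgue embedding the paper's proof reduces to. What your version buys is self-containedness (the paper's final ``which is a contradiction'' is left to the reader) at the cost of a longer, more computational argument; the paper's version buys brevity and makes transparent that the only role of $\o_1\in\dual{\O_{\t_1}}$ and $\o_2\in\O_{\t_2}$ is to make the two constants $\|\o_1\|_{\t_1,(0,\tau)}$ and $\|\o_2\|_{\t_2,(\tau,\i)}$ finite and positive. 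Both proofs handle the endpoint exponents $p_2=\i$, $\t_1=\i$, $\t_2=\i$ without change.
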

\begin{proof}
	Assume that $\dual{LM_{p_1\t_1,\o_1}(\rn,v_1)} \hookrightarrow LM_{p_2\t_2,\o_2}(\rn,v_2)$ holds. Then there exist $c>0$ such that
	\begin{equation*}
	\|f\|_{LM_{p_2\t_2,\o_2}(\rn,v_2)} \leq c \ \|f\|_{\dual{LM_{p_1\t_1,\o_1}(\rn,v_1)}}
	\end{equation*}
	holds for all $f\in \M^+(\rn)$. Let $\tau\in (0,\infty)$ and $f\in \M(\rn)$:  $\supp f\subset  B(0,\tau)$. It is easy to see that
	\begin{align}
	\|f\|_{LM_{p_2\t_2,\o_2}(\rn,v_2)}&= \big\| \|f\|_{p_2,v_2,B(0,t)} \big\|_{\t_2,\o_2,(0,\infty)}\notag\\
	&\geq \big\| \|f\|_{p_2,v_2,B(0,t)} \big\|_{\t_2,\o_2,(\tau,\infty)}\notag\\
	&\geq \|\o_2 \|_{\t_2,(\tau,\infty)} \, \|f\|_{p_2,v_2,B(0,\tau)} \label{1}
	\end{align}
	and
	\begin{align}
	\|f\|_{\dual LM_{p_1\t_1,\o_1}(\rn,v_1)}&= \big\| \|f\|_{p_1,v_1,\dual B(0,t)} \big\|_{\t_1,\o_1,(0,\infty)}\notag\\
	&= \big\| \|f\|_{p_1,v_1,\dual B(0,t)} \big\|_{\t_1,\o_1,(0,\tau)}\notag\\
	&\leq \| \o_1 \|_{\t_1,(0,\tau)} \, \|f\|_{p_1,v_1, B(0,\tau)} \label{2}.
	\end{align}
	
	Combining \eqref{1} with \eqref{2}, we can assert that
	\begin{equation*}
	\| \o_2 \|_{\t_2,(\tau,\infty)} \, \|f\|_{p_2,v_2,B(0,\tau)}\leq c \, \| \o_1 \|_{\t_1,(0,\tau)} \,  \|f\|_{p_1,v_1,B(0,\tau)}.
	\end{equation*}
	Since $\o_1\in \dual{\O_{\t_1}}$ and $\o_2\in \O_{\t_2}$, we conclude that $L_{p_1}(B(0,\tau),v_1)\hookrightarrow L_{p_2}(B(0,\tau),v_2)$, which is a contradiction.
\end{proof}

The following lemma is true.
\begin{lem}\label{mainlemma}
	Let $0 < p_1, \,p_2, \,\t_1, \,\t_2 < \infty$, $p_2 \leq p_1$ and $p_2 < \t_2$. Assume that $v_1, v_2\in \W(\rn)$, $\o_1 \in \dual{\O_{\t_1}}$ and $\o_2\in \O_{\t_2}$. Then
	\begin{align*}
	\|\Id\|_{\dual LM_{p_1\t_1,\o_1}(\rn,v_1)\rw LM_{p_2\t_2,\o_2}(\rn,v_2)}
	= \left\{\sup_{g\in \M^+\I} \ddfrac{\|\Id\|_{\dual LM_{p_1\t_1,\o_1}(\rn,v_1)\rw L_{p_2}\big(v_2(\cdot)H^*g(|\cdot|)^{\frac{1}{p_2}}\big)}^{p_2}}{\|g\|_{\frac{\t_2}{\t_2-p_2},\o_2^{-p_2},(0,\infty)}} \right\}^{\frac{1}{p_2}}.
	\end{align*}
\end{lem}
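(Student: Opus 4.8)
The plan is to exploit the hypothesis $p_2<\t_2$ in order to linearise the outer quasinorm of $LM_{p_2\t_2,\o_2}(\rn,v_2)$ by Lebesgue (associate-norm) duality, then to transfer the resulting bilinear pairing from $\I$ to $\rn$ by Fubini's theorem, and finally to interchange the supremum over the test functions $g$ with the supremum over $f$ defining the embedding norm. Throughout I would write $F(t):=\|f\|_{p_2,v_2,B(0,t)}^{p_2}=\int_{B(0,t)}|f(x)|^{p_2}v_2(x)^{p_2}\,dx$, so that, after raising the $LM$-quasinorm to the power $p_2$ and using $\o_2(t)^{\t_2}=(\o_2(t)^{p_2})^{\t_2/p_2}$,
\begin{equation*}
\|f\|_{LM_{p_2\t_2,\o_2}(\rn,v_2)}^{p_2}=\bigg(\int_0^\infty\big(F(t)\,\o_2(t)^{p_2}\big)^{\t_2/p_2}\,dt\bigg)^{p_2/\t_2}=\big\|F\,\o_2^{p_2}\big\|_{\t_2/p_2,\I}.
\end{equation*}

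First I would apply the classical duality for the Lebesgue quasinorm. Since $p_2<\t_2$, the exponent $\t_2/p_2$ exceeds $1$, so for the non-negative function $F\o_2^{p_2}$ one has
\begin{equation*}
\big\|F\,\o_2^{p_2}\big\|_{\t_2/p_2,\I}=\sup_{g\in\M^+\I}\frac{\int_0^\infty F(t)\,\o_2(t)^{p_2}g(t)\,dt}{\|g\|_{(\t_2/p_2)',\I}},\qquad\Big(\tfrac{\t_2}{p_2}\Big)'=\frac{\t_2}{\t_2-p_2}.
\end{equation*}
Replacing the test function $g$ by $g\,\o_2^{-p_2}$ (a bijection of $\M^+\I$, since $\o_2\in\O_{\t_2}$ is positive a.e.) turns the denominator into $\|g\|_{\frac{\t_2}{\t_2-p_2},\o_2^{-p_2},\I}$ while the numerator becomes $\int_0^\infty F(t)g(t)\,dt$. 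The integrand $|f(x)|^{p_2}v_2(x)^{p_2}g(t)\mathbf{1}_{\{|x|<t\}}$ is non-negative, so Tonelli's theorem justifies exchanging the order of integration:
\begin{equation*}
\int_0^\infty F(t)g(t)\,dt=\int_{\rn}|f(x)|^{p_2}v_2(x)^{p_2}\bigg(\int_{|x|}^\infty g(t)\,dt\bigg)dx=\big\|f\big\|_{p_2,\,v_2(\cdot)H^*g(|\cdot|)^{1/p_2},\,\rn}^{p_2}.
\end{equation*}
Combining the last three displays yields, for every admissible $f$,
\begin{equation*}
\|f\|_{LM_{p_2\t_2,\o_2}(\rn,v_2)}^{p_2}=\sup_{g\in\M^+\I}\frac{\|f\|_{p_2,\,v_2(\cdot)H^*g(|\cdot|)^{1/p_2},\,\rn}^{p_2}}{\|g\|_{\frac{\t_2}{\t_2-p_2},\o_2^{-p_2},\I}}.
\end{equation*}

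It then remains to divide by $\|f\|_{\dual LM_{p_1\t_1,\o_1}(\rn,v_1)}^{p_2}$, take the supremum over $f$, and interchange the two suprema (permissible, since each ranges over a fixed set and the quotient is jointly non-negative). The inner supremum over $f$ is, by definition, the $p_2$-th power of $\|\Id\|_{\dual LM_{p_1\t_1,\o_1}(\rn,v_1)\rw L_{p_2}(v_2(\cdot)H^*g(|\cdot|)^{1/p_2})}$, and raising the resulting identity to the power $1/p_2$ gives exactly the assertion. I expect the only genuinely delicate point to be the rigorous justification of the duality step: one must verify that the associate-quasinorm representation of $\|F\o_2^{p_2}\|_{\t_2/p_2,\I}$ is valid for an arbitrary non-negative measurable $F$, including the case in which the quasinorm is infinite (so that the displayed identities remain valid as equalities of possibly infinite quantities), and that restricting the test functions $g$ to $\M^+\I$ does not diminish the supremum; granting this, the Fubini transfer and the interchange of suprema are routine. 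The crucial structural hypothesis is $p_2<\t_2$, which is precisely what makes $\t_2/p_2>1$ and hence renders the duality available.
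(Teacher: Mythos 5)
Your proposal is correct and takes essentially the same route as the paper's own proof: Lebesgue duality for the exponent $\t_2/p_2>1$ applied to $t\mapsto\|f\|_{p_2,v_2,B(0,t)}^{p_2}\,\o_2(t)^{p_2}$, a Fubini--Tonelli transfer of the resulting pairing to $\rn$, and an interchange of the suprema over $f$ and $g$. The only cosmetic difference is the order of the last two steps (the paper interchanges the suprema before applying Fubini), which is immaterial.
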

\begin{proof}
	By duality, interchanging suprema, we have that
	\begin{align*}
	\|\Id\|_{\dual LM_{p_1\t_1,\o_1}(\rn,v_1)\rw LM_{p_2\t_2,\o_2}(\rn,v_2)} & \\
	& \hspace{-3cm} =\sup_{f\in \M^+(\rn)} \ddfrac{\|f\|_{LM_{p_2\t_2,\o_2}(\rn,v_2)}}{\|f\|_{\dual LM_{p_1\t_1,\o_1}(\rn,v_1)}} \\
	& \hspace{-3cm} = \sup_{f\in \M^+(\rn)} \ddfrac{1}{\|f\|_{\dual LM_{p_1\t_1,\o_1}(\rn,v_1)}} \sup_{g\in \M^+\I} \ddfrac{\bigg(\int_0^\infty \bigg(\int_{B(0,\tau)} f(x)^{p_2}  v_2(x)^{p_2} \, dx \bigg) g(\tau)\,d\tau\bigg)^{\frac{1}{p_2}}} {\|g\|_{\frac{\t_2}{\t_2-p_2},\o_2^{-p_2},(0,\infty)}^{\frac{1}{p_2}}}\\
	& \hspace{-3cm} = \sup_{g\in \M^+\I} \ddfrac{1}{\|g\|_{\frac{\t_2}{\t_2-p_2},\o_2^{-p_2},(0,\infty)}^{\frac{1}{p_2}}} \sup_{f\in \M^+(\rn)} \ddfrac{\bigg(\int_0^\infty \bigg(\int_{B(0,\tau)} f(x)^{p_2} v_2(x)^{p_2} \,dx\bigg)  g(\tau) \,d\tau\bigg)^{\frac{1}{p_2}}} {\|f\|_{\dual LM_{p_1\t_1,\o_1}(\rn,v_1)}}.
	\end{align*}
	
	Applying Fubini's Theorem, we get that
	\begin{align}
	\|\Id\|_{\dual LM_{p_1\t_1,\o_1}(\rn,v_1)\rw LM_{p_2\t_2,\o_2}(\rn,v_2)} & \notag \\
	& \hspace{-3cm} = \sup_{g\in \M^+\I} \ddfrac{1}{\|g\|_{\frac{\t_2}{\t_2-p_2},\o_2^{-p_2},(0,\infty)}^{\frac{1}{p_2}}} \sup_{f\in \M^+(\rn)} \ddfrac{\bigg(\int_{\rn} f(x)^{p_2} v_2(x)^{p_2} \bigg(\int_{|x|}^{\infty} g(\tau) \,d\tau \bigg)\, dx \bigg)^{\frac{1}{p_2}}} {\|f\|_{\dual LM_{p_1\t_1,\o_1}(\rn,v_1)}}\notag\\
	& \hspace{-3cm} = \sup_{g\in \M^+\I} \ddfrac{1}{\|g\|_{\frac{\t_2}{\t_2-p_2},\o_2^{-p_2},(0,\infty)}^{\frac{1}{p_2}}} \|\Id\|_{\dual LM_{p_1\t_1,\o_1}(\rn,v_1)\rw L_{p_2}\big(v_2(\cdot)H^*g(|\cdot|)^{\frac{1}{p_2}}\big)}. \label{GommeninIlkDenkligi}
	\end{align}
\end{proof}

\noindent{\bf Proof of Theorem \ref{maintheorem1}.}
By Lemma \ref{mainlemma}, we have that
\begin{align*}
\|\Id\|_{\dual LM_{p_1\t_1,\o_1}(\rn,v_1)\rw LM_{p_2\t_2,\o_2}(\rn,v_2)} =
\sup_{g\in \M^+\I}
\frac{1}{\|g\|_{\frac{\t_2}{\t_2-p_2},\o_2^{-p_2},(0,\infty)}^{\frac{1}{p_2}}}
\|\Id\|_{\dual LM_{p_1\t_1,\o_1}(\rn,v_1)\rw L_{p_2}\big(v_2(\cdot)
	H^*g(|\cdot|)^{\frac{1}{p_2}}\big)}.
\end{align*}

Since $\t_1\leq p_2$, applying \cite[Theorem~4.2, (a)]{mu_emb}, we obtain that
\begin{align*}
\|\Id\|_{\dual LM_{p_1\t_1,\o_1}(\rn,v_1)\rw LM_{p_2\t_2,\o_2}(\rn,v_2)} \ap
\left\{ \sup_{g\in \M^+(0,\infty)} \ddfrac {\sup_{t\in (0,\infty)}
	\|\o_1\|_{\t_1,(0,t)}^{- p_2} \|H^*g(|\cdot|)\|_{\frac{p_1}{p_1 - p_2},(v_1^{-1}v_2)^{p_2},B(0,t)}}
{\|g\|_{\frac{\t_2}{\t_2-p_2},\o_2^{-p_2},(0,\infty)}}\right\}^{\frac{1}{p_2}}.
\end{align*}

Using polar coordinates, we have that
\begin{equation*}
\|H^*g(|\cdot|)\|_{\frac{p_1}{p_1-p_2},(v_1^{-1}v_2)^{p_2},B(0,t)}=\|H^*g\|_{\frac{p_1}{p_1-p_2},{\tilde v}^{\frac{p_1-p_2}{p_1}},(0,t)}, \quad t>0,
\end{equation*}
where
\begin{equation*}
\tilde{v}(r):=\int_{S^{n-1}} (v_1^{-1}v_2)(rx')^{\frac{p_1p_2}{p_1- p_2}}r^{n-1}d\sigma(x'), \quad r > 0.
\end{equation*}

Thus, we obtain that\begin{align*}
\|\Id\|_{\dual LM_{p_1\t_1,\o_1}(\rn,v_1)\rw LM_{p_2\t_2,\o_2}(\rn,v_2)} \ap \left\{\sup_{g\in \M^+\I} \ddfrac{\sup_{t\in(0,\infty)} \|\o_1\|_{\t_1,(0,t)}^{-p_2} \left\|H^*g\right\|_{\frac{p_1}{p_1-p_2},{\tilde v}^{\frac{p_1-p_2}{p_1}},(0,t)}}  {\|g\|_{\frac{\t_2}{\t_2-p_2},\o_2^{-p_2},(0,\infty)}}\right\}^{\frac{1}{p_2}}.
\end{align*}

Taking into account that
\begin{align}
\int_0^t \tilde{v} (r)\,dr & = \int_0^t \int_{S^{n-1}} (v_1^{-1}v_2)(rx')^{\frac{p_1p_2}{p_1-p_2}}\,d\sigma(x') r^{n-1} dr \notag \\
& = \int_{B(0,t)} (v_1^{-1}v_2)^{\frac{p_1p_2}{p_1-p_2}} (x)\,dx = \widetilde V(t)^{\frac{p_1p_2}{p_1-p_2}}, \label{Vtilde}
\end{align}

\rm{(i)} if $p_1\le \theta_2$, then applying \cite[Theorem 3.2,
(i)]{gmp}, we arrive at
\begin{equation*}
\|\Id\|_{\dual LM_{p_1\t_1,\o_1}(\rn,v_1)\rw LM_{p_2\t_2,\o_2}(\rn,v_2)} \ap
\sup_{x\in (0,\infty)} \vp_1(x) \sup_{t\in (0,\infty)} {\mathcal V}(t,x)
\|\o_2\|_{\t_2,(t,\infty)};
\end{equation*}

\rm{(ii)} if $\t_2 < p_1$, then applying \cite[Theorem 3.2, (ii)]{gmp}, we arrive at
\begin{align*}
\|\Id\|_{\dual LM_{p_1\t_1,\o_1}(\rn,v_1)\rw LM_{p_2\t_2,\o_2}(\rn,v_2)} \ap
\sup_{x\in (0,\infty)} \vp_1(x) \bigg(\int_0^\i {\mathcal V}(t,x)^{p_1 \rw
	\t_2}  d\bigg( - \|\o_2\|_{\t_2,(t,\infty)}^{p_1 \rw \t_2}\bigg)
\bigg)^{\frac{1}{p_1 \rw \t_2}}.
\end{align*}

The proof is completed.

\hspace{16.9cm}$\square$

\begin{rem}
	In view of Remark~\ref{limsupcondition}, if
	\begin{align*}
	\limsup_{t\rw 0+} \widetilde V(t) \|\o_1\|_{\t_1,(0,t)}^{-1} & = \limsup_{t\rw
		+\infty} \widetilde V(t)\|\o_1\|_{\t_1,(0,t)} \\
	& = \limsup_{t\rw 0+}\|\o_1\|_{\t_1,(0,t)} = \limsup_{t\rw +\infty}
	\|\o_1\|_{\t_1,(0,t)}^{-1} = 0,
	\end{align*}
	then $\vp_1 \in Q_{\widetilde V^{\frac{1}{p_1 \rw p_2}}}$.
\end{rem}

\noindent{\bf Proof of Theorem \ref{maintheorem3}.}
By  Lemma \ref{mainlemma}, applying \cite[Theorem~4.2, (c)]{mu_emb},  we have that
\begin{align*}
\|\Id\|_{\dual LM_{p_1\t_1,\o_1}(\rn,v_1)\rw LM_{p_2\t_2,\o_2}(\rn,v_2)} \ap & \,\, \|\o_1\|_{\t_1,(0,\infty)}^{-1} \left\{\sup_{g\in \M^+\I} \ddfrac{ \|H^*g(|\cdot|)\|_{\frac{p_1}{p_1 - p_2},(v_1^{-1}v_2)^{p_2},\rn}} {\|g\|_{\frac{\t_2}{\t_2 - p_2},\o_2^{-p_2},(0,\infty)}} \right\}^{\frac{1}{p_2}} \\
& + \left\{\sup_{g\in \M^+\I} \ddfrac{\bigg(\int_0^\i \|H^* g(|\cdot|)\|_{\frac{p_1}{p_1 - p_2},(v_1^{-1}v_2)^{p_2},B(0,t)}^{\frac{\t_1}{\t_1 - p_2}} d \bigg( - \|\o_1\|_{\t_1,(0,t)}^{- \frac{\t_1 p_2}{\t_1 - p_2}} \bigg)\bigg)^{\frac{\t_1 - p_2}{\t_1}}}  {\|g\|_{\frac{\t_2}{\t_2 - p_2},\o_2^{-p_2},(0,\infty)}} \right\}^{\frac{1}{p_2}}.
\end{align*}

Using polar coordinates, we have that
\begin{align*}
\|\Id\|_{\dual LM_{p_1\t_1,\o_1}(\rn,v_1)\rw LM_{p_2\t_2,\o_2}(\rn,v_2)} \ap & \,\, \|\o_1\|_{\t_1,(0,\infty)}^{-1} \left\{\sup_{g\in \M^+\I} \ddfrac{ \|H^*g\|_{\frac{p_1}{p_1-p_2},{\tilde v}^{\frac{p_1-p_2}{p_1}},(0,\infty)}} {\|g\|_{\frac{\t_2}{\t_2-p_2},\o_2^{-p_2},(0,\infty)}}\right\}^{\frac{1}{p_2}} \notag \\
& + \left\{\sup_{g\in \M^+\I} \ddfrac{\bigg(\int_0^{\infty} \|H^*g\|_{\frac{p_1}{p_1-p_2},{\tilde v}^{\frac{p_1-p_2}{p_1}},(0,t)}^{\frac{\t_1}{\t_1-p_2}} d\bigg(-\|\o_1\|_{\t_1,(0,t)}^{-\frac{\t_1p_2}{\t_1-p_2}}\bigg)\bigg)^{\frac{\t_1-p_2}{\t_1}}}  {\|g\|_{\frac{\t_2}{\t_2-p_2},\o_2^{-p_2},(0,\infty)}}\right\}^{\frac{1}{p_2}}\\
:= & \,\, C_1 + C_2.
\end{align*}

Assume first that $p_1\leq \t_2$. On using the characterization of the boundedness of the operator $H^*$ in weighted Lebesgue spaces (see, for instance, \cites{ok,kp}), we arrive at
\begin{equation*}
C_1 \ap  \|\o_1\|_{\t_1,(0,\infty)}^{-1} \sup_{t\in (0,\infty)} \widetilde V(t)\,
\|\o_2\|_{\t_2,(t,\infty)}.
\end{equation*}

{\rm (i)} Let  $\t_1\le \t_2$. Applying \cite[Theorem 3.1, (i)]{gmp}, we obtain that
\begin{equation*}
C_2 \ap \sup_{x\in (0,\infty)} \vp_2(x) \, \sup_{t\in (0,\infty)} {\mathcal V} (t,x) \,\|\o_2\|_{\t_2,(t,\infty)}.
\end{equation*}
Consequently, the proof is completed in this case.

{\rm (ii)} Let $\t_2<\t_1$. Using \cite[Theorem 3.1, (ii)]{gmp}, we have that
\begin{align*}
C_2 \ap \bigg( \int_0^\i \vp_2(x)^{\frac{\t_1 \rw \t_2 \cdot \t_1 \rw p_2}{\t_2 \rw p_2}} \widetilde{V}(x)^{\t_1 \rw p_2}  \bigg(\sup_{t\in(0,\infty)}
{\mathcal V}(t,x) \|\o_2\|_{\t_2,(t,\infty)}\bigg)^{\t_1 \rw \t_2}\, d\bigg( - \|\o_1\|_{\t_1,(0,x)}^{- \t_1 \rw p_2}\bigg) \bigg)^{\frac{1}{\t_1 \rw \t_2}},
\end{align*}
and the statement follows in this case.

Let us now assume that $\t_2 < p_1$. Then, using the characterization of the boundedness of the operator $H^*$ in weighted Lebesgue spaces, we have that
\begin{equation*}
C_1 \ap \|\o_1\|_{\t_1,(0,\infty)}^{-1} \bigg( \int_0^\i \widetilde V(t)^{p_1 \rw \t_2} \,d\bigg( - \|\o_2\|_{\t_2, (t,\infty)}^{p_1 \rw \t_2}\bigg) \bigg)^{\frac{1}{p_1 \rw \t_2}}.
\end{equation*}

{\rm (iii)} Let $\t_1\leq \t_2$, then \cite[Theorem 3.1, (iii)]{gmp} yields that
\begin{align*}
C_2\ap \sup_{x\in (0,\infty)} \vp_2(x) \bigg(\int_0^{\infty} {\mathcal V}(t,x)^{p_1 \rw \t_2} d \bigg( - \|\o_2\|_{\t_2, (t,\infty)}^{p_1 \rw
	\t_2}\bigg)\bigg)^{\frac{1}{p_1 \rw \t_2}},
\end{align*}
and these completes the proof in this case.

{\rm (iv)} If $\t_2 < \t_1$, then on using \cite[Theorem 3.1,
(iv)]{gmp}, we arrive at
\begin{align*}
C_2 \ap  \bigg( \int_0^\infty \vp_2(x)^{\frac{\t_1 \rw \t_2 \cdot \t_1 \rw p_2}{\t_2 \rw p_2}} \widetilde{V}(x)^{\t_1 \rw p_2} \bigg(\int_0^{\infty}{\mathcal V}(t,x)^{p_1 \rw \t_2} d \bigg(-\|\o_2\|_{\t_2,(t,\infty)}^{p_1 \rw \t_2} \bigg)
\bigg)^{\frac{\t_1 \rw \t_2}{p_1 \rw \t_2}}
d\bigg(-\|\o_1\|_{\t_1,(0,x)}^{- \t_1 \rw p_2} \bigg)
\bigg)^{\frac{1}{\t_1 \rw \t_2}},
\end{align*}
and in this case the proof is completed. 

\hspace{16.9cm}$\square$

\begin{rem}
	Assume that $\vp_2(x) < \infty, ~ x > 0$. In view of
	Remark~\ref{nondegrem}, if
	$$
	\int_0^1 \bigg(\int_0^t  \o_1^{\t_1} \bigg)^{-\frac{\t_1}{\t_1 - p_2}}
	\o_1^{\t_1}(t) \,dt = \int_1^{\infty} \widetilde V(t)^{\frac{\t_1 p_2}{\t_1 - p_2}}
	\bigg(\int_0^t  \o_1^{\t_1} \bigg)^{-\frac{\t_1}{\t_1-p_2}} \o_1^{\t_1}(t) \,dt = \infty,
	$$
	then $\vp_2 \in Q_{\widetilde V^{\frac{1}{p_1 \rw p_2}}}$.
\end{rem}

\noindent{\bf Proof of Theorem \ref{maintheorem2}.}
By  Lemma \ref{mainlemma}, applying \cite[Theorem~4.2, (b)]{mu_emb},  we get that
\begin{align*}
\|\Id\|_{\dual LM_{p_1\t_1,\o_1}(\rn,v_1)\rw LM_{p_2\t_2,\o_2}(\rn,v_2)} = \left\{ \sup_{g\in \M^+(0,\infty)} \ddfrac{\sup_{t\in (0,\infty)} \|\o_1\|_{\t_1,(0,t)}^{-p} \|H^*g(|\cdot|)\|_{\infty,(v_1^{-1}v_2)^{p},B(0,t)}} {\|g\|_{\frac{\t_2}{\t_2-p},\o_2^{-p},(0,\infty)}} \right\}^{\frac{1}{p}}.
\end{align*}

Recall that, whenever $F,G$ are non-negative measurable functions on $(0,\infty)$ and $F$ is non-increasing, then 
\begin{equation}\label{esupFG1}
\esup_{t\in (0,\infty)} F(t)G(t) = \esup_{t\in (0,\infty)} F(t) \esup_{\tau\in (0,t)} G(\tau).
\end{equation}

Observe that 
\begin{equation}\label{polcorforsup}
\|H^*g(|\cdot|)\|_{\infty,(v_1^{-1}v_2)^p,B(0,t)} = \sup_{\tau\in(0,t)} \, \sup_{|y|=\tau} \big( v_1^{-1}(y) v_2(y)\big)^p H^* g (|y|)  = \|H^*g\|_{\infty,\tilde{\tilde {v}}, (0,t)}
\end{equation}
holds for all $t > 0$, where $\tilde{\tilde {v}}(\tau):= \big(\sup_{|y|=\tau} v_1^{-1}(y)v_2(y)\big)^p$, $\tau > 0$. 

On using \eqref{esupFG1}, we get that
\begin{align*}
\|\Id\|_{\dual LM_{p_1\t_1,\o_1}(\rn,v_1)\rw LM_{p_2\t_2,\o_2}(\rn,v_2)} & = \left\{ \sup_{g\in \M^+(0,\infty)} \ddfrac{\sup_{t\in (0,\infty)} \|\o_1\|_{\t_1,(0,t)}^{-p} \|H^*g\|_{\infty,\tilde{\tilde {v}}, (0,t)}} {\|g\|_{\frac{\t_2}{\t_2-p},\o_2^{-p},(0,\infty)}} \right\}^{\frac{1}{p}} \\
& = \left\{ \sup_{g\in \M^+(0,\infty)} \ddfrac{ \|H^*g\|_{\infty,\|\o_1\|_{\t_1,(0,\cdot)}^{-p} \tilde{\tilde{v}}(\cdot),(0,\infty)}
} {\|g\|_{\frac{\t_2}{\t_2-p},\o_2^{-p},(0,\infty)}} \right\}^{\frac{1}{p}}.
\end{align*}

Using the characterization of the boundedness of $H^*$ in weighted Lebesgue spaces, we obtain that
\begin{align*}
\|\Id\|_{\dual LM_{p_1\t_1,\o_1}(\rn,v_1)\rw LM_{p_2\t_2,\o_2}(\rn,v_2)} & \ap \sup_{t\in (0,\infty)} \|\o_2\|_{\t_2,(t,\infty)} \bigg(\sup_{s\in(0,t)} \|\o_1\|_{\t_1,(0,s)}^{-1} \tilde{\tilde v}(s)^{\frac{1}{p}}\bigg) \\
& = \sup_{t\in (0,\infty)} \|\o_2\|_{\t_2,(t,\infty)}
\bigg(\sup_{s\in (0,t)} \sup_{|y|=s} \|\o_1\|_{\t_1,(0,|y|)}^{-1} v_1^{-1}(y) v_2(y)\bigg)\\
& = \sup_{t\in (0,\infty)} \|\o_2\|_{\t_2,(t,\infty)}
\bigg(\sup_{x\in B(0,t)} \|\o_1\|_{\t_1,(0,|x|)}^{-1} v_1^{-1}(x) v_2(x)\bigg)\\  
& = \sup_{t\in (0,\infty)} \|\o_2\|_{\t_2,(t,\infty)}
\bigg\| \|\o_1\|_{\t_1,(0,|\cdot|)}^{-1} \bigg\|_{\infty,v_1^{-1} v_2,B(0,t)}.
\end{align*}

\hspace{16.9cm}$\square$

\noindent{\bf Proof of Theorem \ref{maintheorem4}.}
By  Lemma \ref{mainlemma}, applying \cite[Theorem~4.2, (d)]{mu_emb}, and using \eqref{polcorforsup}, we get that
\begin{align*}
\|\Id\|_{\dual LM_{p_1\t_1,\o_1}(\rn,v_1)\rw LM_{p_2\t_2,\o_2}(\rn,v_2)} \ap & \,\, \|\o_1\|_{\t_1,(0,\infty)}^{-1} \left\{\sup_{g\in \M^+(0,\infty)} \ddfrac{\|H^*g\|_{\infty,\tilde{\tilde{v}},(0,\infty)}} {\|g\|_{\frac{\t_2}{\t_2-p},\o_2^{-p},(0,\infty)}} \right\}^{\frac{1}{p}}\\
&  + \left\{\sup_{g\in \M^+(0,\infty)} \ddfrac{\bigg(\int_0^{\infty} \|H^*g\|_{\infty,\tilde{\tilde {v}}, (0,t)} ^{\frac{\t_1}{\t_1 - p}} d\bigg(-\|\o_1\|_{\t_1,(0,t)}^{-\frac{\t_1 p}{\t_1 - p}}\bigg)\bigg)^{\frac{\t_1 - p}{\t_1}}}  {\|g\|_{\frac{\t_2}{\t_2 - p},\o_2^{-p},(0,\infty)}}\right\}^{\frac{1}{p}}\\
:= & \,\, C_3 + C_4.
\end{align*}

Again, using the characterization of the boundedness of $H^*$ in weighted Lebesgue spaces, we obtain that
\begin{equation*}
C_3\ap \|\o_1\|_{\t_1,(0,\infty)}^{-1} \sup_{t\in (0,{\infty})} \widetilde V(t) \|\o_2\|_{\t_2,(t,\infty)}.
\end{equation*}

{\rm (i)} Let  $\t_1 \le \t_2$, then by \cite[Theorem 4.1]{gop}, we have that
\begin{align*}
C_4  \ap \sup_{x\in (0,\infty)} \bigg( \widetilde{V}(x)^{\t_1\rw p} \int_x^{\infty} \, d \bigg(-\|\o_1\|_{\t_1,(0,t)}^{-\t_1\rw p}\bigg) + \int_0^x  \widetilde{V}(t)^{\t_1\rw p} \, d\bigg(-\|\o_1\|_{\t_1,(0,t)}^{- \t_1\rw p}\bigg) \bigg)^{{\frac{1}{\t_1\rw p}}} \|\o_2\|_{\t_2,(x,\infty)},
\end{align*}
and the statement follows in this case.

{\rm (ii)} Let  $\t_2 < \t_1$, then \cite[Theorem 4.4]{gop} yields that
\begin{align*}
C_4  \ap & \,\, \bigg(\int_0^{\infty} \bigg(\int_x^{\infty} d\bigg(- \|\o_1\|_{\t_1,(0,t)}^{-\t_1\rw p}\bigg)\bigg)^{\frac{\t_1\rw \t_2}{\t_2 \rw p}} \bigg(\sup_{0 < \tau \leq x} \widetilde V(\tau) \|\o_2\|_{\t_2,(\tau,\infty)}\bigg)^{\t_1 \rw \t_2} d\bigg(-\|\o_1\|_{\t_1,(0,x)}^{-\t_1 \rw p}\bigg)\bigg)^{\frac{1}{\t_1 \rw \t_2}} \\
& + \bigg(\int_0^{\infty} \bigg(\int_0^x \widetilde V(t)^{\t_1\rw p} d\bigg(-\|\o_1\|_{\t_1,(0,t)}^{-\t_1\rw p} \bigg) \bigg)^{\frac{\t_1\rw \t_2}{\t_2\rw p}} \widetilde{V}(x)^{\t_1\rw p} \|\o_2\|_{\t_2,(t,\infty)}^{\t_1\rw \t_2} d\bigg(-\|\o_1\|_{\t_1,(0,x)}^{-\t_1\rw p} \bigg)\bigg)^{\frac{1}{\t_1\rw \t_2}},
\end{align*}
and the proof is completed in this case.

\hspace{16.9cm}$\square$

\

{\bf Acknowledgments.} The research of A. Gogatishvili was partially supported by the grant P201-13-14743S of the Grant Agency of the Czech Republic and RVO: 67985840 and  by Shota Rustaveli National Science Foundation grants no. DI/9/5-100/13 (Function spaces, weighted inequalities for integral operators and problems of summability of Fourier series).

% % % % % % % % % % % % % % % % % % % % % % % % % % % % % % % % % % % % % % % % % % % % % % % % % % % % % % % % % % % % % % % % % % % % %

\begin{bibdiv}
	\begin{biblist}
		
		\bib{batsaw}{article}{
			author={Batbold, Ts.},
			author={Sawano, Y.},
			title={Decompositions for local Morrey spaces},
			journal={Eurasian Math. J.},
			volume={5},
			date={2014},
			number={3},
			pages={9--45},
			issn={2077-9879},
		}
		
		\bib{Bur1}{article}{
			author={Burenkov, V.I.},
			title={Recent progress in studying the boundedness of classical operators
				of real analysis in general Morrey-type spaces. I},
			journal={Eurasian Math. J.},
			volume={3},
			date={2012},
			number={3},
			pages={11--32},
			issn={2077-9879},
		}
		
		\bib{Bur2}{article}{
			author={Burenkov, V. I.},
			title={Recent progress in studying the boundedness of classical operators
				of real analysis in general Morrey-type spaces. II},
			journal={Eurasian Math. J.},
			volume={4},
			date={2013},
			number={1},
			pages={21--45},
			issn={2077-9879},
		}
		
		\bib{BurGol}{article}{
			author={Burenkov, V. I.},
			author={Goldman, M. L.},
			title={Necessary and sufficient conditions for the boundedness of the
				maximal operator from Lebesgue spaces to Morrey-type spaces},
			journal={Math. Inequal. Appl.},
			volume={17},
			date={2014},
			number={2},
			pages={401--418},
			issn={1331-4343},
			doi={10.7153/mia-17-30},
		}
		
		\bib{BurHus1}{article}{
			author={Burenkov, V. I.},
			author={Guliyev, H. V.},
			title={Necessary and sufficient conditions for boundedness of the maximal
				operator in local Morrey-type spaces},
			journal={Studia Math.},
			volume={163},
			date={2004},
			number={2},
			pages={157--176},
			issn={0039-3223},
		}
		
		\bib{BurGulHus2}{article}{
			author={Burenkov, V. I.},
			author={Guliyev, H. V.},
			author={Guliyev, V. S.},
			title={Necessary and sufficient conditions for the boundedness of
				fractional maximal operators in local Morrey-type spaces},
			journal={J. Comput. Appl. Math.},
			volume={208},
			date={2007},
			number={1},
			pages={280--301},
			issn={0377-0427},
		}
		
		\bib{BurGulHus1}{article}{
			author={Burenkov, V.I.},
			author={Guliyev, H.V.},
			author={Guliyev, V.S.},
			title={On boundedness of the fractional maximal operator from
				complementary Morrey-type spaces to Morrey-type spaces},
			conference={
				title={The interaction of analysis and geometry},
			},
			book={
				series={Contemp. Math.},
				volume={424},
				publisher={Amer. Math. Soc.},
				place={Providence, RI},
			},
			date={2007},
			pages={17--32},
		}
		
		\bib{BurGulSerTar}{article}{
			author={Burenkov, V.I.},
			author={Guliyev, V.S.},
			author={Serbetci, A.},
			author={Tararykova, T.V.},
			title={Necessary and sufficient conditions for the boundedness of genuine
				singular integral operators in local Morrey-type spaces},
			journal={Eurasian Math. J.},
			volume={1},
			date={2010},
			number={1},
			pages={32--53},
			issn={2077-9879},
		}
		
		\bib{BGGM}{article}{
			author={Burenkov, V.I.},
			author={Gogatishvili, A.},
			author={Guliyev, V.S.},
			author={Mustafayev, R.Ch.},
			title={Boundedness of the fractional maximal operator in local
				Morrey-type spaces},
			journal={Complex Var. Elliptic Equ.},
			volume={55},
			date={2010},
			number={8-10},
			pages={739--758},
			issn={1747-6933},
		}
		
		\bib{BGGM1}{article}{
			author={Burenkov, V.I.},
			author={Gogatishvili, A.,}
			author={Guliyev, V.S.},
			author={Mustafayev, R.Ch.},
			title={Boundedness of the Riesz potential in local Morrey-type spaces},
			journal={Potential Anal.},
			volume={35},
			date={2011},
			number={1},
			pages={67--87},
			issn={0926-2601},
		}
		
		\bib{BurNur}{article}{
			author={Burenkov, V.I.},
			author={Nursultanov, E.D.},
			title={Description of interpolation spaces for local Morrey-type spaces},
			language={Russian, with Russian summary},
			journal={Tr. Mat. Inst. Steklova},
			volume={269},
			date={2010},
			number={Teoriya Funktsii i Differentsialnye Uravneniya},
			pages={52--62},
			issn={0371-9685},
			translation={
				journal={Proc. Steklov Inst. Math.},
				volume={269},
				date={2010},
				number={1},
				pages={46--56},
				issn={0081-5438},
			},
		}
		
		\bib{Folland}{book}{
			author={Folland, G. B.},
			title={Real analysis},
			series={Pure and Applied Mathematics (New York)},
			edition={2},
			note={Modern techniques and their applications;
				A Wiley-Interscience Publication},
			publisher={John Wiley \& Sons, Inc., New York},
			date={1999},
			pages={xvi+386},
			isbn={0-471-31716-0},
		}
		
		\bib{giltrud}{book}{
			author={Gilbarg, D.},
			author={Trudinger, N. S.},
			title={Elliptic partial differential equations of second order},
			edition={2},
			publisher={Springer-Verlag, Berlin},
			date={1983},
			pages={xiii+513},
			isbn={3-540-13025-X},
		}
		
		\bib{gmp}{article}{
			author={Gogatishvili, A.}, 
			author={Mustafayev, R. Ch.},
			author={Persson, L.-E.}, 
			title={Some new iterated Hardy-type
				inequalities}, 
			journal={J. Funct. Spaces Appl.}, 
			date={2012},
			pages={Art. ID 734194, 30}, }
		
		\bib{gop}{article}{
			author={Gogatishvili, A.}, author={Opic, B.}, author={Pick, L.},
			title={Weighted inequalities for Hardy-type operators involving
				suprema}, journal={Collect. Math.}, volume={57}, date={2006},
			number={3}, pages={227--255}, }
		
		\bib{GulDoc}{book}{
			author={Guliyev, V.S.},
			title={Integral operators on function spaces on the homogeneous groups and on domains in $\R^n$},
			publisher={Doctor's degree dissertation. Mat. Inst. Steklov.},
			place={Moscow},
			language={Russian},
			date={1994},
			pages={329 pp.},
		}
		
		\bib{kp}{book}{
			author={Kufner, A.}, author={Persson, L.-E.}, title={Weighted
				inequalities of Hardy type}, publisher={World Scientific Publishing
				Co. Inc.}, place={River Edge, NJ}, date={2003}, pages={xviii+357},
			isbn={981-238-195-3}, review={\MR{1982932 (2004c:42034)}}, }
		
		\bib{M1938}{article}{
			author={Morrey, C. B.},
			title={On the solutions of quasi-linear elliptic partial differential
				equations},
			journal={Trans. Amer. Math. Soc.},
			volume={43},
			date={1938},
			number={1},
			pages={126--166},
			issn={0002-9947},
			doi={10.2307/1989904},
		}
		
		\bib{mu_emb}{article}{
			author={Mustafayev, R. Ch.},
			author={{\"U}nver, T.},
			title={Embeddings between weighted local Morrey-type spaces and weighted
				Lebesgue spaces},
			journal={J. Math. Inequal.},
			volume={9},
			date={2015},
			number={1},
			pages={277--296},
			issn={1846-579X},
			doi={10.7153/jmi-09-24},
		}
		
		\bib{ok}{book}{
			author={Opic, B.}, author={Kufner, A.}, title={Hardy-type
				inequalities}, series={Pitman Research Notes in Mathematics Series},
			volume={219}, publisher={Longman Scientific \& Technical},
			place={Harlow}, date={1990}, pages={xii+333}, isbn={0-582-05198-3},
		}

	\end{biblist}
\end{bibdiv}

\end{document}